\newcites{perso}{Publications and preprints}
\newcommand{\E}{\mathbb{E}}
    \newcommand{\Prb}{\mathbb{P}}
	\newcommand{\cR}{\mathcal{R}}
			\newcommand{\cD}{\mathcal{D}}
		\newcommand{\cE}{\mathcal{E}}
		\newcommand{\cF}{\mathcal{F}}
		\newcommand{\cG}{\mathcal{G}}
	\newcommand{\sR}{\mathbb{R}}
				\DeclareMathOperator{\Var}{Var}
				\DeclareMathOperator{\Ent}{Ent}
				\DeclareMathOperator{\shell}{shell}
    \newcommand{\sZ}{\mathbb{Z}}
    \newcommand{\sC}{\mathcal{C}}
    \newcommand{\ep}{\varepsilon}
    \newcommand{\ind}{\mathds{1}}
 \theoremstyle{plain}   
 \newtheorem{thm}{Theorem}[section]
\newtheorem{lem}[thm]{Lemma}
\newtheorem{rk}[thm]{Remark}
\numberwithin{equation}{section}
\title{The variance of the graph distance in the infinite cluster of percolation is sublinear}
\date{}
\author{Barbara Dembin\thanks{ETH Z\"urich}}
\begin{document}
\selectlanguage{english}
\maketitle
\begin{abstract}
    We consider the standard model of i.i.d. bond percolation on $\sZ^d$ of parameter $p$. When $p>p_c$, there exists almost surely a unique infinite cluster $\sC_p$. Using the recent techniques of Cerf and Dembin \cite{CerfDembinreg}, we prove that the variance of the graph distance in $\sC_p$ between two points of $\sC_p$ is sublinear. The main result extends the works of Benjamini, Kalai and Schramm \cite{BKS}, Benaim and Rossignol \cite{BenRos2008} and Damron, Hanson and Sosoe \cite{Damron2015} for the study of the variance of passage times in first passage percolation without moment conditions on the edge-weight distribution.
\end{abstract}
\section{Introduction}
{\textbf {Percolation.}}
Let $d\geq 2$. We consider an i.i.d. supercritical bond percolation on $\sZ^d$, every edge is open with a probability $p>p_c$, where $p_c$ denotes the critical parameter for this percolation. Let $\cG_p$ be the graph of the open edges
\[\cG_p:=(\sZ^d,\{e\in E^d: \text{ $e$ is open}\})\,.\]
We know that there exists almost surely in $\cG_p$ a unique infinite open cluster $\sC_p$ \cite{grimmettt:percolation}. 
We denote by $\cD^{\sC_p}$ the graph distance in the cluster $\sC_p$ that is
\begin{equation}
\forall x,y\in\sZ^d\qquad \cD^{\sC_p}(x,y):=\inf\Big\{\,|r|: \text{$r$ is a path from $x$ to $y$ in $\sC_p$}\,\Big\}\,
\end{equation}
where $|r|$ denote the number of edges in the path $r$ and we use the convention that $\inf\emptyset=+\infty$.
 In particular, if $x$ and $y$ are not connected in $\sC_p$, then we have $\cD^{\sC_p}(x,y)=\infty$. 
 To deal with the fact that $\cD^{\sC_p}(x,y)$ is infinite with positive probability, we will use the technique of Cerf and Théret in \cite{cerf2016} and introduce regularized points.
For $x$ in $\sZ^d$, we define $\widetilde x$ to be the closest point in $\sC_p$ to $x$ with a deterministic rule to break ties.  The advantage of defining regularized points is that for any $x,y\in\sZ^d$, $\cD^{\sC_p}(\widetilde x,\widetilde y)<\infty$ almost surely.

{\textbf {First passage percolation.}} The model of first passage percolation may be seen as a generalization of the model of percolation. Let $G$ be a distribution on $\sR_+\cup\{+\infty\}$.
  To each edge $e\in\E^d$, we assign a random variable $t_e$ such that the family $(t_e,\,e\in\E^d)$ is independent and identically distributed with distribution $G$. The random variable $t_e$ may be interpreted as the time needed to cross the edge $e$. We define a random pseudo-metric $T$ on this graph: for any pair of vertices $x$, $y\in\sZ^d$, the random variable $T(x,y)$ is the shortest time to go from $x$ to $y$, \textit{i.e.}, 
\[T(x,y):=\inf\left\{\sum_{e\in r}t_e\,:\text{ $r$ is a path joining $x$ to $y$}\right\}\,.\]
Note that for the distribution 
\begin{equation}\label{def:Gp}
    G_p=p\delta_1+(1-p)\delta_\infty,\quad p>p_c
\end{equation}
the travel time $T(x,y)$ for the law $G_p$ coincides with the graph distance between $x$ and $y$ in $\cG_p$ where the edges with infinite passage time correspond to the closed edges. 
Thanks to classical tools used in first passage percolation, in particular the subadditive ergodic theorem, we can study $\cD^{\sC_p}(\widetilde 0, \widetilde{nx})$. In particular, Cerf and Théret proved in \cite{cerf2016} a law of large numbers: there exists a deterministic function $\mu_p:\sZ^d\rightarrow [0,+\infty)$ such that
\begin{align*}
\forall x\in\sZ^d\qquad\lim_{n\rightarrow \infty} \frac{\cD^{\sC_{p}}(\widetilde{0},\widetilde{nx})}{n}=\mu_p(x)\qquad\text{a.s. and in $L^1$.}
\end{align*}
The function $\mu_p$ is the so-called time constant.

{\textbf {Fluctuations of the travel time.}}
The question of the fluctuations of $T(0,x)$ for general distributions $G$ is a very central question. It has been conjectured by physicists that the variance $\Var(T(0,x))$ should scale as $\|x\|_1^\alpha$ for some constant $\alpha<1$ depending on the dimension. In particular, in dimension $2$, it is conjectured that the model belongs to the KPZ universality class that was introduced by Kardar, Parisi and Zhang \cite{KPZ}
in 1986, and that $\alpha= 2/3$. However, beyond some related integrable models, the results obtained in this direction are still very modest.
The first upper-bound on the variance was obtained by Kesten in \cite{Kesten93}. He proved that there exists a constant $C$ such that for all $x\in\sZ^d$
\[ \Var(T(0,x))\le C\|x\|_1\]
under some integrability condition on the distribution $G$.
In their seminal paper \cite{BKS}, Benjamini, Kalai and Schramm proved that the fluctuations are sublinear there exists a constant $C$ such that for all $x\in\sZ^d$
\[ \Var(T(0,x))\le C\frac{\|x\|_1}{\log\|x\|}\]
for the case of a distribution $G$ that takes only two values.
The results of \cite {BKS} were later extended to continuous distributions that satisfied a modified logarithmic Sobolev inequality by Benaim and Rossignol \cite{BenRos2008} and to more general distributions under moment conditions using a Bernoulli encoding by Damron, Hanson and Sosoe \cite{Damron2015}.
In this paper, we aim to extend these results to the case of the distribution $G_p$ that was defined in \eqref{def:Gp}.
Namely, we are interested in the variance of the graph distance $\cD^{\sC_{p}}(\widetilde{0},\widetilde{nx})$. The main obstacle to overcome is that the distribution $G_p$ has no moment.
We obtain that the variance of the graph distance in $\sC_p$ is sublinear.
\begin{thm}\label{thm:main}Let $p>p_c$. There exists a positive constant $C_0$ depending on $p$ and $d$ such that 
\begin{equation}
\forall n \geq 1\quad\forall x\in\sZ^d\qquad \Var(\cD^{\sC_{p}}(\widetilde{0},\widetilde{nx}))\leq C_0\frac{n}{\log n}\,.
\end{equation}

\end{thm}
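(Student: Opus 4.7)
My strategy is to follow the Benjamini--Kalai--Schramm roadmap, with the Cerf--Dembin regularity estimates playing the role previously played by moment assumptions on the passage time distribution. The starting point is the Falik--Samorodnitsky variance inequality for functions $F$ of i.i.d. Bernoulli variables: combined with hypercontractivity on $\{0,1\}^{\E^d}$ it gives
\[
\Var(F)\;\log\!\left(\frac{\sum_e \|\Delta_e F\|_2^2}{\sum_e \|\Delta_e F\|_1^2}\right)\;\le\;C\sum_e \|\Delta_e F\|_2^2,
\]
where $\Delta_e F$ is the discrete edge gradient. Since Kesten's method already yields $\sum_e\|\Delta_e F\|_2^2 \le C n$, a logarithmic gain on the variance is obtained as soon as the ratio $\|\Delta_e F\|_2/\|\Delta_e F\|_1$ can be shown to be polynomially large in $n$ for most edges, i.e. as soon as the influence of every single edge is much smaller than the total influence.

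To produce such small influences I would not apply the inequality to the raw distance $\cD^{\sC_p}(\widetilde 0,\widetilde{nx})$, but to the translation average
\[
F_n \;:=\; \frac{1}{|B_m|}\sum_{z\in B_m} \cD^{\sC_p}(\widetilde{z},\widetilde{z+nx}),
\]
where $B_m\subset\sZ^d$ is a mesoscopic box of side length $m$ with $m$ a small power of $\log n$, and to a truncated version $F_n\wedge K_n$ with $K_n$ polynomial in $n$ so as to stay within the reach of the Bernoulli framework. Flipping the state of an edge $e$ only changes those summands whose geodesic visits a neighbourhood of $e$; by the Cerf--Dembin confinement of geodesics in $\sC_p$ to a tube of controlled width around the segment $[\widetilde z,\widetilde{z+nx}]$, the number of such translates $z\in B_m$ is at most of order $m^{d-1}$, so that
\[
\|\Delta_e F_n\|_1 \;\le\; \frac{C}{m},
\]
whereas $\sum_e \|\Delta_e F_n\|_2^2$ remains of order $n$. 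Plugging into Falik--Samorodnitsky yields $\Var(F_n)\le Cn/\log m$, i.e. the right order once $m$ is a small power of $\log n$.

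It remains to compare $\Var(\cD^{\sC_p}(\widetilde 0,\widetilde{nx}))$ with $\Var(F_n)$. The translation invariance of the law, together with the Cerf--Dembin regularity (almost-surely Lipschitz comparison of $\cD^{\sC_p}(\widetilde z,\widetilde{z+nx})$ and $\cD^{\sC_p}(\widetilde 0,\widetilde{nx})$ up to a local error controlled by finite-cluster sizes) gives a bound of the form $\Var(\cD^{\sC_p}(\widetilde 0,\widetilde{nx})) \le 2\Var(F_n) + C m^2$, plus a truncation error absorbed by the Antal--Pisztora exponential tail for $\cD^{\sC_p}(\widetilde 0,\widetilde{nx})$. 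Balancing $m^2$ against $n/\log m$ delivers Theorem~\ref{thm:main}.

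The main obstacle, as I see it, is the $L^1$ control of the edge gradients $\Delta_e F_n$ in a setting where $\cD^{\sC_p}$ has no polynomial moment. Closing a single edge can in principle force a detour of arbitrary length, or even disconnect $\widetilde z$ from a large portion of $\sC_p$, so a naive bound on $\|\Delta_e F_n\|_1$ is worthless. Overcoming this requires simultaneously: (i) the regularization $\widetilde{\cdot}$ to keep both endpoints in $\sC_p$; (ii) exponential tails à la Antal--Pisztora to discard the atypical events where closing $e$ creates a macroscopic detour; and (iii) the Cerf--Dembin tube estimate to ensure that in the typical event only $O(m^{d-1})$ of the $m^d$ translates ever see the edge $e$ on their geodesic. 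Integrating these three ingredients with the Bernoulli BKS machinery, without the comfort of a moment hypothesis on the edge variables, is the only truly new step; everything else is a careful adaptation of the arguments of \cite{BKS}, \cite{BenRos2008} and \cite{Damron2015}.
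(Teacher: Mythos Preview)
Your overall architecture is the same as the paper's (geometric averaging plus Falik--Samorodnitsky plus the Damron--Hanson--Sosoe entropy bound), but there are two genuine gaps.

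\medskip

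\textbf{The mesoscopic scale $m$ is too small.} You take $m$ to be a small power of $\log n$. With any bound of the form $\|\Delta_e F_n\|_1\le Cm^{-\beta}$ and $\sum_e\|\Delta_e F_n\|_1\le Cn$, you get
\[
\sum_e \|\Delta_e F_n\|_1^2 \;\le\; Cm^{-\beta}\sum_e \|\Delta_e F_n\|_1\;\le\; C\,\frac{n}{m^\beta},
\]
so the logarithm in the Falik--Samorodnitsky inequality is $\log(m^\beta)=\beta\log m$. If $m$ is a power of $\log n$ this is only $\asymp \log\log n$, and you end up with $\Var(F_n)\le Cn/\log\log n$, not $Cn/\log n$. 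The paper takes $m\asymp n^{1/4}$, so that $\log m\asymp \log n$; the comparison cost $Cm^2\le n^{3/4}$ is then harmless. There is no balancing to do: any small positive power of $n$ works.

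\medskip

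\textbf{What Cerf--Dembin actually delivers.} You attribute $\sum_e\|\Delta_e F\|_2^2\le Cn$ to ``Kesten's method'' and identify the $L^1$ influence as the main obstacle. It is the other way around. Kesten's argument gives $\sum_e\|\Delta_e F\|_2^2\le Cn$ only under a moment assumption on the detour length, which fails here: closing a single edge on $\gamma$ can force an arbitrarily long bypass. The heart of the paper is Theorem~\ref{thm:cont},
\[
\E\Bigl[\sum_{e\in\gamma}\bigl(\cD^{\sC_{p}\setminus\{e\}}(\widetilde{0},\widetilde{nx})- \cD^{\sC_{p}}(\widetilde{0},\widetilde{nx})\bigr)^2\ind_{\cR_e}\Bigr]\le c_0 n,
\]
obtained from the Cerf--Dembin \emph{shell} construction: to each $e\in\gamma$ one associates a shell of good boxes producing a $p$-open bypass of length $\le\kappa_0|\shell(e)|$, together with a deterministic-type bound $\sum_{e\in\gamma}|\shell(e)|^2\le C_1 n$ with high probability. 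This is not a tube/confinement estimate for geodesics, and your ``only $O(m^{d-1})$ translates see $e$'' heuristic is not how the $L^1$ influence is handled either. The paper instead bounds, for each edge $e$, the expected number of translates via $\E|\gamma\cap(e+\Lambda_m)|\le\alpha m$ (Antal--Pisztora, Lemma~\ref{lem:interbox}) and then applies Cauchy--Schwarz with Lemma~\ref{lem:contl} to get $\E|V_k|\le C\log^2 n\cdot m^{(1-d)/2}$. Your tube picture is not needed and, as far as I know, not available in \cite{CerfDembinreg}.
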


Let us first explain the proof strategy used in \cite{BKS}.
To prove that the variance of $T(0,nx)$ is sublinear, Benjamini, Kalai and Schramm use an inequality of Talagrand \cite{talagrand} related to hypercontractivity. They need to study the expected impact of changing the value of a given edge, that is called the influence of an edge. To use the result of Talagrand, they need that almost all the edges have a small influence. In this context, the influence of an edge $e$ is related to the probability that the geodesic $\gamma$ between $0$ and $nx$ goes through the edge $e$. Since there exists no result controlling the probability that the geodesic goes through a given edge, the authors use a trick to circumvent this issue. They randomized the starting point of the geodesic in such a way that the new random variable has a variance that is still close to the original one and such that all the edges of the lattice have a small influence. This trick of randomizing the starting point was later replaced by Damron \textit{et al.} by a geometric average in \cite{Damron2015}.

In \cite{BKS,BenRos2008,Damron2015}, moment conditions on the distribution are needed. A first reason why a moment condition is needed, is that without it, we have $\Var(x,y)=+\infty$. Note that even if we don't have a good moment condition, this problem may be solved by the use of regularized points. This is exactly in the same spirit than the use of regularized points for the study of the graph distance ensures that the graph distance is finite and has good moment properties.
But, the main reason why moment conditions are needed, is that it enables to obtain a good control on the impact of resampling an edge.
When the distribution is bounded, resampling an edge on the geodesic cannot affect too much the graph distance between two points. We can easily upperbound the influence by a constant. However, in the context of the graph distance in the infinite cluster of percolation, closing one edge on the geodesic can have a big impact on the graph distance. This is the main issue to extend the previous results to the distribution $G_p$ that can take infinite value. To solve this issue, we use here the recent technology developed by Cerf and Dembin in \cite{CerfDembinreg}. Before stating the key ingredient of the proof, let us introduce some definitions. Let $p>p_c$.
 Let $\sC^ {e}_p$ be the infinite connected component of $\sC_p\setminus \{e\}$, it is almost surely unique. For $x\in\sZ^d$, denote by $\widetilde x ^ e $ the closest point to $x$ in $\sC_p^ e$. Let us denote by $\cR_e$ the following event
 \begin{equation}\label{eq:defFe}
    \cR_e:=\{\widetilde 0=\widetilde 0 ^ e,\widetilde{nx}=\widetilde{nx}^ e\}\,. 
 \end{equation}
 The event $\cR_e$ is the event that the regularized points are unchanged when closing the edge $e$.
 We will need the following theorem that is the key result to prove the main theorem.
\begin{thm}\label{thm:cont}Let $p>p_c$. There exists a positive constant $c_0$ depending on $p$ and $d$ such that for any $x\in\sZ^d$ and $n\ge 1$, let $\gamma$ be a geodesic from $\widetilde 0$ to $\widetilde{nx}$, we have
\begin{equation*}
    \E\left[\sum_{e\in\gamma}(\cD^{\sC_{p}\setminus\{e\}}(\widetilde{0},\widetilde{nx})- \cD^{\sC_{p}}(\widetilde{0},\widetilde{nx}))^2\ind_{\cR_e}\right]\le c_0 n\,.
\end{equation*}

\end{thm}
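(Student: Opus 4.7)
The plan is to reduce the edgewise increment $\cD^{\sC_p\setminus\{e\}}(\widetilde 0,\widetilde{nx})-\cD^{\sC_p}(\widetilde 0,\widetilde{nx})$ to a local detour length around $e$, and then to sum these contributions against $|\gamma|$, which has expectation of order $n$ by the law of large numbers of Cerf and Théret \cite{cerf2016}.

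First I would observe that if $e=\{u,v\}\in\gamma$ on the event $\cR_e$, both regularized endpoints lie in $\sC_p^e$, and $e$ cannot be a bridge of $\sC_p$ separating them (else $\widetilde 0$ and $\widetilde{nx}$ would sit in different components of $\sC_p\setminus\{e\}$, only one of which is infinite, contradicting $\cR_e$). In particular $u,v\in\sC_p^e$, and splicing a shortest path from $u$ to $v$ in $\sC_p\setminus\{e\}$ into $\gamma$ in place of $e$ produces a path from $\widetilde 0$ to $\widetilde{nx}$ in $\sC_p\setminus\{e\}$ of length $\cD^{\sC_p}(\widetilde 0,\widetilde{nx})-1+\cD^{\sC_p\setminus\{e\}}(u,v)$, giving the pointwise bound
\[
\bigl(\cD^{\sC_{p}\setminus\{e\}}(\widetilde{0},\widetilde{nx})-\cD^{\sC_{p}}(\widetilde{0},\widetilde{nx})\bigr)^2 \le \cD^{\sC_p\setminus\{e\}}(u,v)^2 \quad\text{on } \cR_e\cap\{e\in\gamma\}.
\]
The task thus reduces to controlling $\E\bigl[\sum_{e\in\gamma}\cD^{\sC_p\setminus\{e\}}(u,v)^2\,\ind_{\cR_e}\bigr]$, a sum of squared local detour lengths.

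Next I would invoke the shell construction from \cite{CerfDembinreg}: for each edge $e=\{u,v\}$, one builds a random radius $\rho(e)$, measurable on a suitably stopped neighborhood of $e$, such that on $\cR_e$ one has $\cD^{\sC_p\setminus\{e\}}(u,v)\le C\rho(e)$, together with an exponential tail $\Prb(\rho(e)\ge k)\le C e^{-c k}$ uniform in $e$. Concretely, $\rho(e)$ is the smallest radius around $e$ at which a block renormalization produces a good shell providing a detour from $u$ to $v$ of linear length, in the spirit of the Antal-Pisztora bound on the chemical distance.

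Finally I would bound the remaining sum by dyadic coarse-graining. At scale $2^k$, an edge $e\in\gamma$ with $\rho(e)\simeq 2^k$ contributes only if its surrounding box of side $2^k$ is ``bad'' (probability $\lesssim e^{-c 2^k}$), so heuristically the expected number of such edges is $\lesssim \E|\gamma|\cdot e^{-c2^k}$; weighting by $2^{2k}$ and summing over $k$ produces a convergent geometric series, and combined with $\E|\gamma|\le Cn$ this yields the desired $O(n)$ bound. The main obstacle lies precisely in this last decoupling: $\{e\in\gamma\}$ and $\{\rho(e)\ge 2^k\}$ are both measurable with respect to the configuration near $e$, so a naive factorization is not available. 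The Cerf-Dembin construction is designed so that $\{\rho(e)\ge k\}$ is witnessed at distance of order $k$ from $e$ by an annular shell event, which permits a BK-type disjoint-occurrence argument to absorb the indicator $\ind_{e\in\gamma}$ into a factor $\E|\gamma|$; carrying this decoupling through, uniformly across scales, is the heart of the proof.
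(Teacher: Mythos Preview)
Your pointwise reduction to local detour lengths is correct and aligns with the paper. The divergence is at the crucial step: controlling $\E\bigl[\sum_{e\in\gamma}\rho(e)^2\bigr]$. You propose an edge-by-edge argument --- an exponential tail on $\rho(e)$ together with a BK-type decoupling of $\{e\in\gamma\}$ from $\{\rho(e)\ge k\}$. This is not what the paper does, and it is not clear it can be made to work as stated: the event $\{e\in\gamma\}$ is neither monotone nor local (it depends on the full configuration between $\widetilde 0$ and $\widetilde{nx}$), so BK does not apply, and the annular localization of $\{\rho(e)\ge k\}$ does not by itself yield the factorization $\Prb(e\in\gamma,\rho(e)\ge k)\lesssim e^{-ck}\Prb(e\in\gamma)$ you need. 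You correctly identify this as ``the heart of the proof'' but do not supply an argument.

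The paper sidesteps the decoupling entirely by using the \emph{aggregate} bound from Proposition~4.5 of \cite{CerfDembinreg}. The shells $(\shell(e))_{e\in\overline\gamma}$ are built simultaneously along the path $\gamma$ (Proposition~3.5), the detour is bounded by $\kappa_0|\shell(e)|$ (Proposition~3.6), and the key output is a high-probability event $\cE_0$ on which $\sum_{e\in\overline\gamma}|\shell(e)|^2\le C_1 n$ holds directly --- no per-edge factorization is required. The edges near the endpoints (those in $\gamma\setminus\overline\gamma$, within distance $\mathrm N\le n^{1/3d}$ of $\widetilde 0$ or $\widetilde{nx}$) and the complementary event $\cE_0^c$ are then handled by crude moment bounds via Lemma~\ref{lem:contl}. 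This aggregate control \emph{is} the content of the Cerf--Dembin machinery here; your sketch underuses it by extracting only the marginal tail on a single shell and then attempting to reinvent the correlation estimate that the multiscale construction was designed to deliver.
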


 Roughly speaking, this result says that on average, closing an edge on the geodesic modifies the graph distance by at most a constant. This theorem is a consequence of the work of Cerf and Dembin in \cite{CerfDembinreg}. This theorem together with the Efron-Stein inequality leads to an upper-bound on the variance of order $n$ which is already a new result in the context of the graph distance (with some additional technical details due to the use of regularized points). To prove that the variance is sublinear we will use the geometric averaging trick and concentration inequalities used by Damron and al in \cite{Damron2015}. This geometric average will ensure that every edge in the lattice has a small influence. Once the key result Theorem \ref{thm:cont} is proved,  the remaining of the proof used the concentration inequalities in the same way as \cite{Damron2015} with some additional technical difficulties due to the fact that we use regularized points. 
 \begin{rk}
 We believe that our proof strategy together with the Bernoulli encoding used by Damron and al in \cite{Damron2015} can also work for any distribution $G$ on $\sR_+\cup\{+\infty\}$ such that $G(\{+\infty\})<1- p_c$.
 \end{rk}
 
 In Section \ref{sec:background}, we present some standard facts about supercritical percolation and we present the concentration inequalities we will use. In Section \ref{sec:proofs}, we prove Theorems \ref{thm:main} and \ref{thm:cont}.
 \section{Background}\label{sec:background}
 \subsection{Background on percolation}
 We will need the following standard facts about percolation.
 For $x\in \sZ^d$, let $\sC_p(x)$ be the $p$-open cluster of $x$. We denote by $\|\cdot\|_2$ the $\ell_2$ norm. We have the following theorem that controls the probability of having a large and finite open cluster.
 \begin{thm}[Theorems 8.18 and 8.19 in \cite{grimmettt:percolation}]\label{thm:nosmallclus} Let $p>p_c$. There exist positive constants $A_1$ and $A_2$ such that 
 \[\forall n\ge 1\qquad\Prb(0\notin \sC_p, \sC_p(0)\cap \partial \Lambda_n\ne\emptyset)\le A_1\exp(-A_2 n)\,\]
 where $\Lambda_n=[-n,n]^ d\cap \sZ^ d$ and $\partial\Lambda_n=\{y\notin\Lambda_n:\exists x\in\Lambda_n, \{x,y\}\in\E^ d\}$.
 \end{thm}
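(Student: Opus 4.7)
The plan is to derive this exponential decay via a static block renormalization argument combined with the Grimmett--Marstrand slab theorem. The intuition is that for $p > p_c$, most sufficiently large blocks each contain a macroscopic piece of $\sC_p$ with high probability, so a finite cluster that nevertheless reaches $\partial \Lambda_n$ must be insulated from $\sC_p$ by a macroscopic atypical region, which should carry an exponentially small probability.

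First I would invoke the Grimmett--Marstrand theorem: for every $p > p_c(\sZ^d)$ there exists $k = k(p)$ such that percolation of parameter $p$ restricted to the slab $\sZ^{d-1} \times \{0,\dots,k\}$ is supercritical. In $d=2$ this is immediate from planar duality; in $d \ge 3$ it is the deep input, proved by a dynamic renormalization. In particular, the infinite cluster has positive density and large boxes are crossed by $\sC_p$ with probability $1-o(1)$. Second, I would set up a static block renormalization: partition $\sZ^d$ into cubes $B_z = Kz + [0,K)^d$, and declare $B_z$ \emph{good} if, in a neighborhood such as $3 B_z$, there is a unique open crossing cluster of density close to $\theta(p)$, and this cluster is connected to the unique crossing clusters of each of the $2d$ neighboring enlarged blocks. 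Following Pisztora's construction, for $K$ large the events $\{B_z \text{ good}\}$ have finite range of dependence and individual probability arbitrarily close to $1$, so by Liggett--Schonmann--Stacey they stochastically dominate a Bernoulli site percolation of density arbitrarily close to $1$ on the renormalized lattice.

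The crucial step, and the main obstacle, is the \emph{gluing property}: by the very definition of goodness, any two neighboring good blocks share the same crossing cluster in their common overlap, so all good blocks in a $*$-connected good component lie in one and the same original open cluster; by Burton--Keane uniqueness and positive density, this common cluster must be $\sC_p$. Crafting the goodness event so that (i) the density of good blocks can be pushed arbitrarily close to $1$ by taking $K$ large, (ii) gluing holds deterministically, and (iii) the events have only finite-range dependence is the delicate but now-standard part of the argument.

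Granting the gluing, the proof concludes as follows. If $0 \notin \sC_p$ but $\sC_p(0)$ meets $\partial \Lambda_n$, then every block entered by $\sC_p(0)$ must be bad, for otherwise $0$ would lie in $\sC_p$. Since $\sC_p(0)$ is connected and has diameter at least $n$, this forces a $*$-connected path of bad blocks of length at least $n/(cK)$ in the renormalized lattice, starting in a bounded neighborhood of the origin. Because the density of bad blocks lies well below the critical threshold for site percolation on $\sZ^d$, such a path has probability at most $A_1 \exp(-A_2 n)$ by the standard subcritical exponential decay (Peierls-type counting of $*$-paths together with BK or a direct contour bound), yielding the claim.
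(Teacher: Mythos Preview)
The paper does not prove this statement at all: it is quoted verbatim as a background fact with a citation to Grimmett's textbook (Theorems~8.18 and~8.19), and is then used as a black box in the proofs of Lemmas~\ref{lem:contesp} and~\ref{lem:contsum}. So there is no ``paper's own proof'' to compare your attempt against.

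That said, your sketch is a reasonable outline of the standard route to this estimate (slab percolation via Grimmett--Marstrand, Pisztora-type block goodness, Liggett--Schonmann--Stacey domination, then a Peierls count of bad $*$-paths), and is in the same spirit as the textbook proof. One point deserves tightening: the claim ``every block entered by $\sC_p(0)$ must be bad'' is not literally correct. A good block may very well be entered by $\sC_p(0)$ as a small cluster that does not meet the crossing cluster; the correct assertion is that every block in which $\sC_p(0)$ attains diameter at least some fixed fraction of $K$ must be bad (this is exactly what the uniqueness clause in the Pisztora goodness event buys you). You then need the small observation that a connected set of diameter $\ge n$ must achieve large diameter in at least $cn/K$ blocks forming a $*$-connected chain. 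With that correction the argument goes through.
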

 The following theorem controls the probability of having a big hole in the infinite cluster.
 \begin{thm}[Theorem 7 in \cite{grimmettt:percolation}]\label{thm:hole}Let $p>p_c$. There exist positive constants $A_3$ and $A_4$ such that 
 \[\forall n\ge 1\qquad\Prb(\sC_p\cap \Lambda_n =\emptyset)\le A_3\exp(-A_4 n)\,.\]
 \end{thm}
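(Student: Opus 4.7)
My approach is a renormalization argument at a mesoscopic scale $K$, reducing to a Peierls-type contour estimate for a very supercritical Bernoulli site percolation on the block lattice. A more direct strategy, combining Theorem \ref{thm:nosmallclus} with a packing of $k=(n/m)^d$ disjoint local boxes $v_i+\Lambda_m$ inside $\Lambda_n$, breaks down: making the union-bound term $k A_1 e^{-A_2 m}$ exponentially small in $n$ forces $m$ to be at least linear in $n$, which in turn makes $k$ constant and so $(1-\theta(p))^k$ is not exponentially small. Thus the full supercritical renormalization machinery really is needed.

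Fix a large integer $K=K(p,d)$ and partition $\sZ^d$ into disjoint cubic blocks $B_y=Ky+[0,K)^d$, $y\in\sZ^d$. Declare $B_y$ to be \emph{good} if it contains an open cluster $C_y$ of size at least $\alpha K^d$ (for some fixed $\alpha>0$) that moreover, in each of the $2d$ doubled blocks $B_y\cup B_{y\pm e_i}$, is connected by an open path to the analogous big cluster of $B_{y\pm e_i}$. By the Grimmett--Marstrand slab percolation theorem, or equivalently Pisztora's renormalization lemma for supercritical Bernoulli percolation, $\Prb(B_y\text{ good})\ge 1-\delta(K)$ with $\delta(K)\to 0$ as $K\to\infty$. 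This supercritical input is the key technical ingredient and what I regard as the principal obstacle: one has to verify that the local definition of good block really produces a process that both connects to $\sC_p$ and is close to Bernoulli percolation. Since goodness is a finite-range function of the underlying edges, the Liggett--Schonmann--Stacey stochastic domination theorem then guarantees that $(\ind_{B_y\text{ good}})_{y\in\sZ^d}$ dominates a Bernoulli site percolation on $\sZ^d$ with parameter $q(K)$ that can be pushed arbitrarily close to $1$ by taking $K$ large.

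Choose $K$ large enough that $q(K)$ is in the Peierls regime. A standard contour argument, summing $(1-q(K))^{|\gamma|}$ over closed-site contours $\gamma$ separating a box from infinity, then shows that a block-scale box of linear size $M$ avoids the infinite good-site cluster with probability at most $C e^{-cM}$. The proof concludes by linking the two scales: if $\sC_p\cap\Lambda_n=\emptyset$, then no good block $B_y\subset\Lambda_n$ can lie in the infinite good-block cluster, since by construction the big cluster $C_y$ of any good block in that infinite cluster is part of $\sC_p$ and would intersect $\Lambda_n$. Therefore $\{\sC_p\cap\Lambda_n=\emptyset\}$ is contained in the block-scale hole event, whose probability is at most $C e^{-c n/K}$ by the previous step. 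Since $K$ depends only on $p$ and $d$, this produces constants $A_3,A_4>0$ with $\Prb(\sC_p\cap\Lambda_n=\emptyset)\le A_3\exp(-A_4 n)$ for every $n\ge 1$, as claimed.
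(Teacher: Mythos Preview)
The paper does not give its own proof of this statement: it is quoted as a background fact from Grimmett's book and used without argument. So there is nothing in the paper to compare your proof against.

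That said, your renormalization sketch is a correct and standard route to this estimate. The chain of implications---Pisztora-type block events have probability $1-\delta(K)$ with $\delta(K)\to 0$, Liggett--Schonmann--Stacey domination by high-density Bernoulli site percolation, Peierls contour bound on the block lattice, and finally the inclusion $\{\sC_p\cap\Lambda_n=\emptyset\}\subset\{\text{block-box of side }\lfloor n/K\rfloor\text{ misses the infinite good-block cluster}\}$---is sound. The one step worth making explicit is the monotonicity you use implicitly at the end: under the LSS coupling the infinite good-block cluster contains the infinite Bernoulli-$q$ cluster, so if the block-box misses the former it also misses the latter, and it is for the latter that the Peierls bound is proved. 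Your opening remark about why the naive packing-plus-Theorem~\ref{thm:nosmallclus} approach fails is also accurate and a nice observation.
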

 The following theorem gives a control on the graph distance.
 \begin{thm}[Antal and Pisztora \cite{AntalPisztora}] \label{thm:antal}
 Let $p>p_c$. There exist positive constants $\beta$, $A_5$ and $A_6$ such that
 \[\forall x,y\in\sZ^d\quad \forall m\ge \beta \|x-y\|_2\qquad\Prb(m\le \cD^{\sC_p}(x,y)<\infty)\le A_5\exp(-A_6m)\,.\]
 \end{thm}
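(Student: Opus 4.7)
My approach would follow the classical renormalization strategy of Antal and Pisztora: compare the chemical distance in $\sC_p$ to the chemical distance in a highly supercritical site percolation on a coarse-grained lattice, where a direct Peierls-type argument applies, and then transfer the estimate back.

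First, fix an integer $K$ to be chosen large and partition $\sZ^d$ into cubic blocks $B_i = Ki + [0,K)^d$. Declare a block $B_i$ to be \emph{good} if inside the enlarged block $3B_i = Ki + [-K,2K)^d$ there is a unique open cluster touching every face of $3B_i$, and if this crossing cluster is connected inside $3B_i \cup 3B_j$ to the analogous crossing cluster of $3B_j$ for every lattice neighbour $j$ of $i$. By the Grimmett--Marstrand slab theorem and its quantitative consequences, one has $\Prb(B_i \text{ is good}) \to 1$ as $K \to \infty$, uniformly in $p > p_c$ fixed. Since the event $\{B_i \text{ is good}\}$ depends only on edges in a bounded neighbourhood of $B_i$, the family $(\ind_{B_i \text{ good}})_{i \in \sZ^d}$ is $k$-dependent for some fixed $k$, and the Liggett--Schonmann--Stacey theorem then yields stochastic domination by a Bernoulli site percolation on $\sZ^d$ of parameter arbitrarily close to $1$, provided $K$ is taken large.

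Next I would transport the question to the renormalized lattice. Using Theorems \ref{thm:nosmallclus} and \ref{thm:hole}, with probability $1 - O(\exp(-c\|x-y\|_2))$, the points $x$ and $y$ (if in $\sC_p$) lie within a bounded distance of good blocks whose crossing clusters contain them. Any path of neighbouring good blocks in the renormalized lattice induces a path in $\sC_p$ of length at most $CK$ times the number of renormalized steps, by concatenating the crossings and the inter-block connectors. The theorem thus reduces to the same exponential estimate for chemical distance in a Bernoulli site percolation of parameter $1-\ep$ on $\sZ^d$, with $\ep$ as small as we wish.

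This highly supercritical base case is the core technical step. It is proved by a Peierls argument: if $u$ and $v$ lie in the same cluster but $\cD(u,v) \ge \beta' \|u-v\|_2$, then there must exist a large $\ast$-connected surface of closed sites separating the short detours from $u$ to $v$. The number of such surfaces of size $s$ anchored near $u$ is at most $C^s$, while each occurs with probability at most $\ep^s$, so a union bound delivers exponential decay in $s$ once $\ep < 1/C$. The main obstacle is quantitative: one must choose $K$ large enough that (i) the domination by $1-\ep$ site percolation holds with $\ep < 1/C$, and (ii) the exponential rate obtained in the base case survives the multiplicative factor $K$ lost when translating renormalized distance back into lattice distance. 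Once these constants are tracked through the construction, one obtains the stated inequality with $\beta$, $A_5$, $A_6$ depending on $p$ and $d$.
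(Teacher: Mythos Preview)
The paper does not prove this theorem: it is quoted as a background result from Antal and Pisztora \cite{AntalPisztora} and used as a black box (see Section~\ref{sec:background}). There is therefore no ``paper's own proof'' to compare your proposal against.

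That said, your outline is a faithful sketch of the original Antal--Pisztora strategy: coarse-grain into $K$-blocks, define good blocks via unique crossing clusters with local gluing, use Liggett--Schonmann--Stacey to dominate by highly supercritical Bernoulli site percolation, and then run a Peierls argument in the renormalized lattice. One small caution: your description of the Peierls step (``a large $\ast$-connected surface of closed sites separating the short detours'') is somewhat loose; in the actual argument one typically controls the number of bad blocks that any self-avoiding renormalized path from the block of $u$ to the block of $v$ must visit, and shows that if the chemical distance is large then a positive fraction of visited blocks along some contour or path must be bad, which has exponentially small probability. The surface formulation you give would need to be made precise to avoid circularity. Also, the appeal to Theorems~\ref{thm:nosmallclus} and~\ref{thm:hole} to place $x,y$ near good blocks is not quite what is needed here, since the event $\{\cD^{\sC_p}(x,y)<\infty\}$ already forces $x,y\in\sC_p$; the relevant input is rather that a point of $\sC_p$ lies in the crossing cluster of its block with high probability.
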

 We will need in what follows the two following estimates that are consequences of Theorem \ref{thm:antal}.
\begin{lem}\label{lem:contesp}Let $p>p_c$. There exists a constant $\kappa>0$ such that 
\[\forall k\ge 2\quad\forall x,y\in\mathbb Z^d\qquad\E[\cD^{\sC_p}(\widetilde x,\widetilde y)^k]\leq \log ^{2k}n +\kappa \|x-y\|_2^k\,.\]
\end{lem}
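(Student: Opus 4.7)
The plan is to integrate the tail probability $\Prb(\cD^{\sC_p}(\widetilde x, \widetilde y) \ge m)$, combining the two tools at our disposal: Theorem \ref{thm:hole} will localize the regularized endpoints near $x$ and $y$, and Theorem \ref{thm:antal} (Antal--Pisztora) will control the chemical distance between fixed vertices of $\sC_p$. Since $\widetilde x, \widetilde y \in \sC_p$ almost surely, only the ``finite part'' of Antal--Pisztora is needed.

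First I would set $R_x := \|\widetilde x - x\|_\infty$ and $R_y := \|\widetilde y - y\|_\infty$. The event $\{R_x > s\}$ forces $\sC_p \cap (x+\Lambda_s) = \emptyset$, so Theorem \ref{thm:hole} and translation invariance give $\Prb(R_x > s) \le A_3 e^{-A_4 s}$, and similarly for $R_y$. Then, on the event $\{R_x \vee R_y \le s\}$, the pair $(\widetilde x, \widetilde y)$ takes at most $(2s+1)^{2d}$ possible values $(u,v)$ with $u \in x+\Lambda_s$ and $v \in y+\Lambda_s$. A union bound together with Theorem \ref{thm:antal} yields, for every $m \ge \beta(\|x-y\|_2 + 2\sqrt d\, s)$,
\[
\Prb\bigl(m \le \cD^{\sC_p}(\widetilde x, \widetilde y) < \infty,\; R_x \vee R_y \le s\bigr) \le (2s+1)^{2d} A_5 e^{-A_6 m}.
\]

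Next I would balance $s$ and $m$ by taking $s = m/(4\beta\sqrt d)$, so that the Antal--Pisztora threshold reduces to $m \ge 2\beta\|x-y\|_2$ and the polynomial factor $(2s+1)^{2d}$ is absorbed into a slightly worse exponential rate. Combining the two tail estimates gives
\[
\Prb\bigl(\cD^{\sC_p}(\widetilde x, \widetilde y) \ge m\bigr) \le C_1 e^{-c_1 m}\qquad\text{for all } m \ge 2\beta\|x-y\|_2.
\]
The moment bound then follows from $\E[Z^k] = \int_0^\infty k m^{k-1}\Prb(Z \ge m)\,dm$ applied to $Z = \cD^{\sC_p}(\widetilde x, \widetilde y)$, splitting the integral at $m_0 = 2\beta\|x-y\|_2$: the part below $m_0$ contributes at most $m_0^k$, producing the $\kappa\|x-y\|_2^k$ term, while the part above is a Gamma-type integral of order $k!/c_1^k$, absorbed in the additive correction.

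The main technical obstacle is the balancing that prevents the polynomial factor $(2s+1)^{2d}$ from spoiling the exponential decay: this forces $s$ to be taken proportional to $m$ rather than optimal in $\|x-y\|_2$, and one needs $A_6$ large enough relative to $2d$ (or to shift by a constant inside the exponential), which is routine. Some extra bookkeeping is needed for small $\|x-y\|_2$, where the ``constant in $x$'' contribution produced by the tail integral against $m^{k-1}$ gives the additive $\log^{2k} n$-type term in the statement.
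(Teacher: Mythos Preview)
Your proof is correct and follows essentially the same route as the paper: both localize $\widetilde x,\widetilde y$ in boxes of radius proportional to $m$ via Theorem~\ref{thm:hole}, apply a union bound together with Antal--Pisztora over those boxes to obtain an exponential tail for $m\ge 2\beta\|x-y\|_2$, and then integrate the tail against $m^{k-1}$. The only cosmetic difference is that the paper splits the moment sum at $\max(\log^2 n,\,2\beta\|x-y\|_2)$ from the outset, so the $\log^{2k}n$ term appears directly rather than as a post-hoc absorption of the Gamma-type constant.
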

The following lemma controls the expected intersection of a geodesic with a box.
\begin{lem}\label{lem:interbox}Let $p>p_c$. There exists $\alpha>0$ such that for $n\ge 1$ and $x\in\sZ^d$. Let $\gamma$ be a geodesic from $\widetilde 0$ and $\widetilde nx$ in $\sC_p$, we have
\begin{equation*}
    \forall z\in\sZ^d\quad \forall m\ge \log ^2n \qquad \E|\gamma\cap (z+\Lambda_m)|\le \alpha m\,.
\end{equation*}
\end{lem}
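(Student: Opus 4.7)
The plan is to bound $|\gamma \cap (z + \Lambda_m)|$ pointwise by the graph-distance diameter of $(z + \Lambda_m) \cap \sC_p$ inside $\sC_p$, and then estimate this diameter by a union bound on top of the Antal--Pisztora estimate (Theorem~\ref{thm:antal}). Set
\[
M_z := \sup\{\cD^{\sC_p}(u,v) : u, v \in (z+\Lambda_m) \cap \sC_p\},
\]
with $M_z := 0$ when the intersection has fewer than two points. The key deterministic observation is that $|\gamma \cap (z+\Lambda_m)| \le M_z$: if $\gamma$ meets $z + \Lambda_m$, let $u$ and $v$ be the first and last vertices of $\gamma$ lying in the box. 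Since $\gamma \subseteq \sC_p$, both $u$ and $v$ belong to $(z + \Lambda_m) \cap \sC_p$; the sub-path of $\gamma$ from $u$ to $v$ is itself a geodesic (sub-paths of geodesics are geodesics), so its length equals $\cD^{\sC_p}(u,v) \le M_z$; moreover, by the choice of $u$ and $v$, this sub-path already contains every edge of $\gamma$ that lies in $z + \Lambda_m$.

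It then remains to show $\E[M_z] \le \alpha m$ for a constant $\alpha = \alpha(p, d)$. Any two vertices of $z + \Lambda_m$ are at Euclidean distance at most $2\sqrt{d}\, m$, and there are at most $(2m+1)^{2d}$ ordered pairs. Theorem~\ref{thm:antal} combined with a union bound gives, for every $t \ge 2\beta\sqrt{d}\,m$,
\[
\Prb(M_z \ge t) \le (2m+1)^{2d} A_5\exp(-A_6 t),
\]
and integrating the tail yields
\[
\E[M_z] \le 2\beta\sqrt{d}\,m + \frac{A_5}{A_6}(2m+1)^{2d}\exp\!\bigl(-2\beta\sqrt{d}\,A_6\, m\bigr).
\]
The second summand is bounded uniformly in $m \ge 1$ because exponential decay dominates polynomial growth, so we obtain $\E[M_z] \le \alpha m$ for some $\alpha > 0$. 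Taking expectation of the deterministic bound finishes the proof; the assumption $m \ge \log^2 n$ is comfortably satisfied since the above argument works whenever $m \ge 1$.

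The only delicate point is the first step: a priori $\gamma$ might enter and leave the box many times, so it is not immediately clear that a single sub-geodesic controls the whole intersection. Selecting $u$ and $v$ as the extremal visits of $\gamma$ to $z + \Lambda_m$ absorbs every excursion into one sub-path of $\gamma$, which is a geodesic between its endpoints, after which the diameter bound does all the work. The remaining ingredients are a standard union bound and tail integration.
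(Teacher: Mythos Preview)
Your proof is correct and follows essentially the same strategy as the paper: pick the first and last visits of $\gamma$ to the box, bound $|\gamma\cap(z+\Lambda_m)|$ by the graph distance between them (since the sub-path is itself a geodesic), and control this distance via Antal--Pisztora with a union bound over pairs of points in the box. The only cosmetic differences are that the paper restricts the union bound to pairs on $\partial\Lambda_m$ and splits on a good event $\cE$ rather than integrating the tail directly; your version, taking the supremum over all pairs in $z+\Lambda_m$, is in fact slightly cleaner since it sidesteps the edge case where $\widetilde 0$ or $\widetilde{nx}$ lies in the interior of the box.
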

Let us now prove these two lemmas.
\begin{proof}[Proof of Lemma \ref{lem:contesp}]
Set 
$l=\|\widetilde x-x\|_2+\|\widetilde y-y\|_2$.
Let $m\ge 2 \beta\|x-y\|_2$, we have using Theorems \ref{thm:hole} and \ref{thm:antal}
\begin{equation*}
    \begin{split}
        \Prb( \cD^{\sC_p}(\widetilde x,\widetilde y)\ge m)&\le \Prb\left(l \ge \frac{m}{4d\beta}\right)+\Prb(\exists w\in (x+\Lambda_{\frac{m}{4d\beta}})\,\exists z\in (y+\Lambda_{\frac{m}{4d\beta}}): m\le \cD^{\sC_p}(w,z)<\infty)\\
        &\le 2A_3\exp\left(-A_4\frac{m}{4d\beta}\right)+2m^dA_5\exp(-A_6m)
    \end{split}
\end{equation*}
where we use that for any $ w\in (x+\Lambda_{m/(4d\beta)})$ and $z\in (y+\Lambda_{m/(4d \beta )})$
\[\beta\|w-z\|_2 \le \frac{m}{2}+\beta\|x-y\|_ 2 \le m\,.\]
Hence, it yields that
for $k\ge 2$,
\begin{equation*}
    \E[\cD^{\sC_p}(\widetilde x,\widetilde y)^k]\le\max (\log ^2n, 2\beta \|x-y\|_2) ^{k} + \sum_{j\ge \max (\log ^2n, 2\beta \|x-y\|_2) }j^k \Prb(\cD^{\sC_p}(\widetilde x,\widetilde y)\ge j)
\end{equation*}
and the result follows.
\end{proof}

\begin{proof}[Proof of Lemma \ref{lem:interbox}] Let $x\in \sZ^d$, $n\ge 1$. Let $\gamma$ be the geodesic between $\widetilde {0}$ and $\widetilde {nx}$.
Let $z\in\sZ^d$ and $m\ge \log ^2n$. Let us assume $\gamma\cap (z+\Lambda_m)\ne\emptyset$, let us denote by $w$ and $y$ the first and last intersection of $\gamma$ with $z+\Lambda_m$. The portion of $\gamma $ between $w$ and $y$ is a geodesic, its length is equal to $\mathcal D ^{\sC_p}(w,y)$.
Let us denote by $\cE$ the following event
\[\cE:= \left\{\forall x,y\in (z+\partial \Lambda_m)\cap\sC_p: \mathcal D^{\sC_p}(x,y)\le 2d\beta m\right\}\,.\]
Thanks to Theorem \ref{thm:antal}, we have
\[\Prb(\cE^c)\le (2^ddm^{d-1})^2A_5\exp(-A_6m)\,.\]
Hence, we have
\begin{equation*}
    \begin{split}
        \E|\gamma\cap (z+\Lambda_m)|&\leq  \E[|\gamma\cap (z+\Lambda_m)|\ind_{\cE}]+\E[|\gamma\cap (z+\Lambda_m)|\ind_{\cE^c}]\\
        &\le 2d\beta m+2^{3d}m^{3d}A_5\exp(-A_6m)\,.
    \end{split}
\end{equation*}
 The result follows.

\end{proof}

\subsection{Concentration inequalities}
Let $f$ be a real-valued function on $\{0,1\}^{\E^d}$.
Let us enumerate the edges of the lattice $\{e_1,e_2,\dots\}$, we can write the following martingale decomposition
\begin{equation*}
f-\E f=\sum_{k=1}^\infty E(f|\cF_k)-\E(f|\cF_{k-1})
\end{equation*}
where $\cF_k$ is the $\sigma$-algebra generated by the first $k$ edge weights $t_{e_1},\dots,t_{e_k}$.
Set \[V_k=\E(f|\cF_k)-\E(f|\cF_{k-1})\,.\]
We have the following inequality that is proved in \cite{Damron2015} using an inequality proved by Falik and Samorodnitsky \cite{falik_samorodnitsky_2007}.
\begin{lem}[Lemma 3.3. in \cite{Damron2015}]\label{lem:ineqsob}
\begin{equation}
\Var(f)\log \left(\frac{\Var(f)}{\sum_{k=1}^\infty\E(|V_k|)^2}\right)\le \sum_{k=1}^\infty \Ent(V_k^2)
\end{equation}
where $\Ent$ denotes the entropy.
\end{lem}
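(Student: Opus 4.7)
The plan is to establish the inequality in three steps: the usual martingale identity for the variance, a single-coordinate entropy bound of Falik--Samorodnitsky type, and the log-sum inequality. First, since $V_k$ is $\cF_k$-measurable with $\E[V_k\mid\cF_{k-1}]=0$, the $V_k$ are pairwise orthogonal in $L^2$, so that
\[
\Var(f)=\sum_{k=1}^{\infty}\E[V_k^2].
\]

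The heart of the argument is the single-coordinate lower bound
\[
\Ent(V_k^2)\;\ge\;\E[V_k^2]\,\log\frac{\E[V_k^2]}{(\E|V_k|)^2}.
\]
I would derive this from the variational formula $\Ent(Y)=\sup\{\E[Yg]:\E[e^g]\le 1\}$. The choice $g=\log(|V_k|/\E|V_k|)$ makes $\E[e^g]=1$ and yields $\Ent(V_k^2)\ge \E[V_k^2\log|V_k|]-\E[V_k^2]\log\E|V_k|$. The leftover term $\E[V_k^2\log|V_k|]$ is then bounded from below by Jensen's inequality applied to the concave function $\log$ against the tilted probability measure $d\mu=V_k^2\,d\Prb/\E[V_k^2]$ with integrand $1/|V_k|$; this gives $\E[V_k^2\log|V_k|]\ge \E[V_k^2]\log(\E[V_k^2]/\E|V_k|)$. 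Combining the two bounds produces the displayed inequality.

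Finally, I would sum the single-coordinate bound over $k$ and apply the log-sum inequality
\[
\sum_{k} a_k\log\frac{a_k}{b_k}\;\ge\;\Big(\sum_{k} a_k\Big)\log\frac{\sum_k a_k}{\sum_k b_k}
\]
with $a_k=\E[V_k^2]$ and $b_k=(\E|V_k|)^2$. Combined with the variance identity from the first step, this is exactly the claimed bound; the log-sum inequality itself is one line of Jensen applied to the convex function $x\mapsto x\log x$ with weights $b_k/\sum_j b_j$.

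The main obstacle is the single-coordinate entropy step: it is really an $L^2$--$L^1$ hypercontractive bound in disguise, and the whole trick is to locate the right test function in the variational formula (equivalently, the right change of measure) so that the entropy controls the ratio between the $L^2$ and the $L^1$ norm of $V_k$. Once that ingredient is in place, the martingale orthogonality and the log-sum inequality assemble the global statement in a routine way, and the inequality is genuinely useful only when $\sum_k(\E|V_k|)^2$ is much smaller than $\sum_k \E[V_k^2]$, which is precisely the regime supplied by small edge influences.
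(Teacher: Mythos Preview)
The paper does not give a proof of this lemma; it simply quotes it as Lemma~3.3 of \cite{Damron2015} and remarks that it rests on the inequality of Falik and Samorodnitsky \cite{falik_samorodnitsky_2007}. Your argument \emph{is} the Falik--Samorodnitsky proof---orthogonality of the martingale increments, the single-term bound $\Ent(V_k^2)\ge \E[V_k^2]\log\bigl(\E[V_k^2]/(\E|V_k|)^2\bigr)$ via the variational formula for entropy, and then the log-sum inequality---and it is correct (with the usual convention $0\log 0=0$ to handle the event $\{V_k=0\}$ when you plug in $g=\log(|V_k|/\E|V_k|)$).
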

We recall that the entropy of a function $f\ge0$ on $\{0,1\}^{\E^d}$ given a distribution $\pi$ on $\{0,1\}^{\E^d}$ is defined as follows
\[\Ent_\pi(f):=\E_\pi\left[f\log\frac{f}{\E_\pi(f)}\right]\,.\]
Let $p\in(0,1)$.
We will omit $\pi$ when $\pi$ is $\otimes_{e\in \E^d}\textrm{Ber}(p)$ where $\textrm{Ber}(p)$ denotes the Bernoulli distribution of parameter $p$.
For $e\in \E^d$, we define $\sigma_e ^1$ (respectively $\sigma_e^0$ the function that for $t\in\{0,1\}^{\E^d}$ changes $t_e$ into $1$ (respectively into $0$).
Set \[\Delta_e f:=f\circ \sigma_e ^0-f\circ \sigma_e ^1\,.\]
\begin{rk}The edges with value $0$ will correspond to the closed edges and the edges with value $1$ to the open edges. For $f$ a non-increasing functions on $\{0,1\}^{\E^d}$ we have $\Delta_e f \ge 0$. 
\end{rk}
We will need the following lemma to control the entropy.
\begin{lem}\label{lem:logsobber} Let $f$ such that $\E[f^4]<\infty$.
There exists a constant $C>0$ depending on $p$ such that
\begin{equation}
\sum_{k=1}^\infty\Ent(V_k^2)\le C\sum_{k=1}^\infty\E [(\Delta_{e_k} f) ^2]\,.
\end{equation}
\end{lem}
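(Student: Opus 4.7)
The plan is to combine the tensorized log-Sobolev inequality for the Bernoulli($p$) product measure with the orthogonality of martingale differences. For any $p\in(0,1)$, there is a constant $C_p$ such that for every square-integrable $g$ on $\{0,1\}^{\E^d}$ one has
\[
\Ent(g^2)\le C_p\sum_{e\in\E^d}\E\bigl[(\Delta_e g)^2\bigr].
\]
Apply this to $g=V_k$. Since $V_k$ is $\cF_k$-measurable, $\Delta_{e_j}V_k=0$ whenever $j>k$, so the sum reduces to $j\le k$. The hypothesis $\E[f^4]<\infty$ together with the $L^4$-contractivity of conditional expectation gives $V_k\in L^4$, which is more than enough to make every $\Ent(V_k^2)$ finite and legitimize the log-Sobolev step.

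The main step is then to identify $\Delta_{e_j}V_k$ with a martingale increment of $\Delta_{e_j}f$. For $j\le k$, the operator $\Delta_{e_j}$ commutes with $\E[\,\cdot\,|\cF_k]$: since $\sigma(t_{e_j})\subset\cF_k$ and the edge weights are independent, substituting $t_{e_j}\leftarrow 0$ or $1$ inside $\E[f|\cF_k]$ gives the same thing as first substituting inside $f$ and then conditioning. Hence for $j<k$,
\[
\Delta_{e_j}V_k=\E[\Delta_{e_j}f|\cF_k]-\E[\Delta_{e_j}f|\cF_{k-1}],
\]
which is exactly the $k$-th Doob martingale increment of $\Delta_{e_j}f$. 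For the diagonal $j=k$, $\Delta_{e_k}\E[f|\cF_{k-1}]=0$ because $\E[f|\cF_{k-1}]$ does not depend on $t_{e_k}$, so $\Delta_{e_k}V_k=\E[\Delta_{e_k}f|\cF_k]$, and Jensen gives $\E[(\Delta_{e_k}V_k)^2]\le\E[(\Delta_{e_k}f)^2]$.

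Summing over $k$, swapping the order of summation, and using orthogonality of martingale increments for fixed $j$ yields $\sum_{k>j}\E[(\Delta_{e_j}V_k)^2]\le\Var(\Delta_{e_j}f)\le\E[(\Delta_{e_j}f)^2]$. Combined with the diagonal bound, $\sum_{k\ge j}\E[(\Delta_{e_j}V_k)^2]\le 2\E[(\Delta_{e_j}f)^2]$, and the lemma follows with $C=2C_p$. The only real obstacle is bookkeeping: one has to keep track of when $\Delta_{e_j}$ commutes with the conditional expectation and when it does not, and carefully isolate the diagonal $j=k$ term, where the two pieces of $V_k=\E[f|\cF_k]-\E[f|\cF_{k-1}]$ behave asymmetrically under $\Delta_{e_k}$.
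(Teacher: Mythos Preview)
Your proof is correct and follows essentially the same route as the paper: apply the tensorized Bernoulli log-Sobolev inequality to each $V_k$, use that $\Delta_{e_j}$ commutes with $\E[\,\cdot\,|\cF_k]$ for $j\le k$ to identify $\Delta_{e_j}V_k$ as a martingale increment of $\Delta_{e_j}f$, and then sum using $L^2$-orthogonality. The only cosmetic difference is that the paper notes the diagonal term $\E[\E[\Delta_{e_j}f|\cF_j]^2]$ is exactly the starting value of the telescoping martingale sum, so the total over $k\ge j$ equals $\E[(\Delta_{e_j}f)^2]$ on the nose (constant $C_p$), whereas you bound diagonal and off-diagonal separately and pick up a harmless extra factor of $2$.
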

For short, we will write $\Delta_k$ instead of $\Delta_{e_k}$.
The proof of this lemma follows the same idea as in Lemma 6.3 in \cite{Damron2015} but in a simpler context. For sake of completeness, we include the proof of this lemma here. To prove this lemma we need the following lemma.
\begin{lem}[Bernoulli log-Sobolev inequalities]\label{lem:lsibernoulli} Let $p\in(0,1)$. There exists a positive constant $C$ depending on $p$ such that for any function $g\ge 0$ on $\{0,1\}$
\[\Ent_{\mathrm{Ber}(p)}[g^2]\le C(g(0)-g(1))^2\,.\]
\end{lem}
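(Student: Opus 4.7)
The plan is to prove this classical two-point log-Sobolev inequality by a homogeneity reduction plus a short Taylor expansion at the degenerate locus $g(0)=g(1)$. Write $a:=g(0)$, $b:=g(1)$, $q:=1-p$, and set
\[
F(a,b)\,:=\,\Ent_{\mathrm{Ber}(p)}[g^2]\,=\,qa^2\log a^2+pb^2\log b^2-(qa^2+pb^2)\log(qa^2+pb^2),
\]
with the convention $0\log 0=0$. Both $F(a,b)$ and $(a-b)^2$ are positively $2$-homogeneous in $(a,b)$ and vanish when $g\equiv 0$, so it suffices to check the inequality on the compact set $\{\max(a,b)=1\}$. The two sub-cases $a=1$ and $b=1$ are symmetric (they swap the roles of $p$ and $q$), so I would treat only $b=1$, $a=t\in[0,1]$, obtain the resulting constant $C_1=C_1(p)$, and pair it with its counterpart $C_2=C_2(p)$ from the other case. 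With $b=1$, the claim reduces to showing that
\[
h(t)\,:=\,\frac{qt^2\log t^2-(qt^2+p)\log(qt^2+p)}{(1-t)^2}
\]
is bounded on $[0,1)$ by a constant depending only on $p$.

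The core step is to resolve the apparent singularity of $h$ at $t=1$, where numerator and denominator both vanish. A direct differentiation yields
\[
\partial_t F(t,1)\,=\,2qt\log\frac{t^2}{qt^2+p},\qquad \partial_t^2 F(t,1)\big|_{t=1}\,=\,4pq,
\]
together with $F(1,1)=0$ and $\partial_t F(t,1)|_{t=1}=0$. Hence $F(t,1)$ vanishes to exactly second order at $t=1$ with strictly positive leading coefficient $2pq$, so $h$ extends continuously at $t=1$ by $h(1)=2pq$. Elsewhere, the numerator of $h$ is smooth on $(0,1]$ and extends continuously at $t=0$ with value $-p\log p>0$, so $h$ is continuous on the compact interval $[0,1]$ and therefore attains a finite maximum $C_1(p)$. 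Setting $C:=\max(C_1,C_2)$ then delivers the lemma.

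The only step that is not essentially bookkeeping is the Taylor expansion at the diagonal: the inequality would fail near $t=1$ if the second derivative vanished there, so the key input is the explicit computation $\partial_t^2 F(t,1)|_{t=1}=4pq>0$. I do not anticipate any genuine obstacle beyond this, since the statement is the standard two-point case of the Bernoulli log-Sobolev inequality and admits several equivalent proofs (via interpolation in $p$, via Gross's hypercontractivity criterion, or via the explicit formula in Diaconis--Saloff-Coste).
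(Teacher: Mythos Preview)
The paper does not actually give a proof of this lemma: it is stated as the standard two-point Bernoulli log-Sobolev inequality and then used as a black box in the proof of Lemma~\ref{lem:logsobber}. So there is no ``paper's proof'' to compare against.

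Your argument is correct. The $2$-homogeneity of both $F(a,b)$ and $(a-b)^2$ legitimately reduces the claim to the compact slice $\max(a,b)=1$; the $p\leftrightarrow q$ symmetry handles the two sub-cases. For $b=1$, $a=t\in[0,1]$, your computations
\[
\partial_t F(t,1)=2qt\log\frac{t^2}{qt^2+p},\qquad \partial_t^2 F(t,1)\big|_{t=1}=4pq
\]
are right, and they give $h(t)\to 2pq$ as $t\to 1^-$ by Taylor's theorem. At $t=0$ the term $qt^2\log t^2$ extends continuously by $0$ and $qt^2+p\ge p>0$, so $h$ is continuous on the whole of $[0,1]$ and bounded by compactness. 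Taking $C=\max\big(\sup_{[0,1]}h,\ \text{its }p\leftrightarrow q\text{ counterpart}\big)$ finishes the proof. This is a clean self-contained argument for a result the paper simply quotes.
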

We will also need the following theorem.
\begin{thm} [Tensorization of the entropy, Theorem 2.3 in \cite{Damron2015}]\label{thm:tens}Let $p\in(0,1)$. Let $f$ be a non-negative $L^2$ random variable on $\{0,1\} ^{\E^d}$. Let $(t_e)_{e\in\E^d}$ be a family of i.i.d. Bernoulli random variable of parameter $p$ and denote $\pi$ the distribution of the family. For $t\in\{0,1\} ^{\E^d}$, denote by $\pi_k(t)$ be the distribution with respect to the $k^{th}$ coordinate, all the other coordinates remain fix.
We have
\[\Ent_\pi(f)\le \sum_{k=1}^\infty\E_\pi[\Ent_{\pi_k}(f)]\,.\]

\end{thm}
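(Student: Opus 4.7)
The plan is to deduce the tensorization inequality from the chain rule for entropy applied iteratively along a refining filtration, with the variational (Donsker--Varadhan) formula used to control each single-coordinate term. This is the standard martingale proof of subadditivity of entropy adapted to countably many Bernoulli coordinates.

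I would enumerate the edges as $e_1,e_2,\dots$ and consider the filtration $\cF_k:=\sigma(t_{e_1},\dots,t_{e_k})$, with $\cF_0$ trivial. A direct computation starting from $\E_\pi[f\log f]=\E_\pi\bigl[\E_\pi[f\log f\mid\cG]\bigr]$, together with the definition $\Ent(f\mid\cG):=\E_\pi[f\log f\mid\cG]-\E_\pi[f\mid\cG]\log\E_\pi[f\mid\cG]$, yields the chain rule
\[\Ent_\pi(f)\;=\;\Ent_\pi(\E_\pi[f\mid\cG])\;+\;\E_\pi\bigl[\Ent(f\mid\cG)\bigr]\]
for every sub-$\sigma$-algebra $\cG$. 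Setting $f_k:=\E_\pi[f\mid\cF_k]$ and applying the chain rule iteratively with $\cG=\cF_{k-1}$ (and using $\Ent_\pi(f_0)=0$ since $f_0=\E_\pi[f]$ is constant) produces the telescoping identity
\[\Ent_\pi(f_n)\;=\;\sum_{k=1}^{n}\E_\pi\bigl[\Ent(f_k\mid\cF_{k-1})\bigr].\]
Since $f_k$ depends only on $(t_{e_1},\dots,t_{e_k})$ and $\cF_{k-1}$ freezes the first $k-1$ of them, under the conditional law given $\cF_{k-1}$ only the coordinate $t_{e_k}\sim\pi_k$ is random, so each inner term equals $\Ent_{\pi_k}(f_k)$ in the notation of the theorem.

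The heart of the argument is the Jensen-type bound $\Ent_{\pi_k}(f_k)\le\E_\pi[\Ent_{\pi_k}(f)\mid\cF_{k-1}]$. Writing $\pi_{>k}$ for the product of the marginals indexed after $k$, we have $f_k=\int f\,d\pi_{>k}$ with the first $k$ coordinates frozen. From the variational formula
\[\Ent_{\pi_k}(g)\;=\;\sup\bigl\{\,\E_{\pi_k}[gh]\,:\,\E_{\pi_k}[e^h]\le 1\,\bigr\},\]
for any admissible $h=h(t_{e_k})$ Fubini gives
\[\E_{\pi_k}\bigl[h\cdot f_k\bigr]\;=\;\int\E_{\pi_k}[hf]\,d\pi_{>k}\;\le\;\int\Ent_{\pi_k}(f)\,d\pi_{>k},\]
where the pointwise inequality $\E_{\pi_k}[hf]\le\Ent_{\pi_k}(f)+\E_{\pi_k}[f]\log\E_{\pi_k}[e^h]\le\Ent_{\pi_k}(f)$ uses the duality inequality together with $\log\E_{\pi_k}[e^h]\le 0$ and $f\ge 0$. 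Taking the supremum over $h$ yields $\Ent_{\pi_k}(f_k)\le\int\Ent_{\pi_k}(f)\,d\pi_{>k}=\E_\pi[\Ent_{\pi_k}(f)\mid\cF_{k-1}]$, and taking an unconditional $\E_\pi$ gives $\E_\pi[\Ent_{\pi_k}(f_k)]\le\E_\pi[\Ent_{\pi_k}(f)]$.

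Plugging back into the telescoping identity,
\[\Ent_\pi(f_n)\;\le\;\sum_{k=1}^{n}\E_\pi[\Ent_{\pi_k}(f)]\qquad\text{for every }n\ge 1.\]
It remains to let $n\to\infty$. The martingale convergence theorem gives $f_n\to f$ almost surely and in $L^2$ (using the $L^2$ hypothesis on $f$), and a direct application of Jensen's inequality to the convex function $x\log x$ shows that $n\mapsto\Ent_\pi(f_n)$ is non-decreasing; a routine truncation $f\wedge M$ followed by $M\to\infty$ gives $\Ent_\pi(f_n)\uparrow\Ent_\pi(f)$, so the bound extends to the infinite sum. The only genuinely delicate step is the single-coordinate convexity inequality $\Ent_{\pi_k}(\int f\,d\pi_{>k})\le\int\Ent_{\pi_k}(f)\,d\pi_{>k}$, which is why the variational formulation of entropy is used to sidestep any direct computation with $x\log x$.
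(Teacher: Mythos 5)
The paper does not prove this statement; it is quoted verbatim as Theorem 2.3 of \cite{Damron2015}, so there is no in-paper argument to compare against. Your proof is the standard martingale proof of tensorization and is essentially correct: the chain rule $\Ent_\pi(f)=\Ent_\pi(\E_\pi[f\mid\cG])+\E_\pi[\Ent(f\mid\cG)]$, the telescoping along the filtration $\cF_k$, the identification of $\Ent(f_k\mid\cF_{k-1})$ with $\Ent_{\pi_k}(f_k)$, and the convexity step $\Ent_{\pi_k}\bigl(\int f\,d\pi_{>k}\bigr)\le\int\Ent_{\pi_k}(f)\,d\pi_{>k}$ via the Donsker--Varadhan duality are all sound. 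The only place where your write-up is looser than it needs to be is the passage to the limit: you do not actually need $\Ent_\pi(f_n)\uparrow\Ent_\pi(f)$ or any truncation. Since $\Ent_\pi(f_n)\le\sum_{k=1}^{n}\E_\pi[\Ent_{\pi_k}(f)]\le\sum_{k=1}^{\infty}\E_\pi[\Ent_{\pi_k}(f)]$ for every $n$, it suffices to show $\liminf_n\Ent_\pi(f_n)\ge\Ent_\pi(f)$, which follows from $f_n\to f$ a.s. (martingale convergence), the constancy of $\E_\pi[f_n]=\E_\pi[f]$, and Fatou's lemma applied to $f_n\log f_n+e^{-1}\ge 0$; the $L^2$ hypothesis guarantees $\E_\pi[f\log f]<\infty$ via $x\log x\le x^2$. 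With that cosmetic repair the argument is complete.
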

We have now all the ingredients to prove Lemma \ref{lem:logsobber}.
\begin{proof}[Proof of Lemma \ref{lem:logsobber}] Let $k\ge 1$. The random variable $V_k$ only depends on $t_{e_1},\dots,t_{e_k}$. 
Using Theorem \ref{thm:tens} and Lemma \ref{lem:lsibernoulli}, we have
\begin{equation*}
    \Ent_\pi(V_k^2)\le \sum_{j=1}^k \E_\pi[\Ent_{\pi_j}(V_k^2)]\le C\sum_{j=1}^k\E_\pi [(\Delta_j V_k) ^2] 
\end{equation*}
It follows that 
\begin{equation*}
   \E[ (\Delta_j V_k)^2]=\left\{\begin{array}{ll}\E[\E[(\Delta_k f)|\cF_k]^2]&\mbox{if $j=k$}\\
   \E[ (\E[\Delta_j f|\cF_k]- \E[\Delta_j f|\cF_{k-1}])^2]&\mbox{if $j<k$}\,.
   \end{array}\right.
\end{equation*}
Hence
\begin{equation*}
    \begin{split}
      \sum_{k=1}^\infty\sum_{j=1}^k\E [(\Delta_j V_k) ^2]&=\sum_{j=1}^\infty\left(\E[\E[(\Delta_j f)|\cF_j]^2]+ \sum_{k\ge j+1} \E[ (\E[\Delta_j f|\cF_k]- \E[\Delta_j f|\cF_{k-1}])^2]\right)\\
&=\sum_{j=1}^\infty \lim_{N\rightarrow \infty}\E[\E[\Delta_jf|\cF_N]^2]  =\sum_{j=1}^\infty \E[(\Delta_j f)^2]
    \end{split}
\end{equation*}
where we used the orthogonality of the martingale increments in $L^2$ and the convergence of closed martingales. The result follows.
\end{proof}

\section{Proofs}\label{sec:proofs}
\subsection{Proof of Theorem \ref{thm:main}}
Let $p>p_c$. Let $(B_e)_{e\in E^d}$ be an i.i.d. family of Bernoulli random variable of parameter $p$.
Let $n\ge 1$ and $x\in\sZ^d$.
Set $m$ be the largest integer such that $m \le n^{1/4}$.
 Set $\Lambda_m=[-m,m]^d\cap\sZ^d$ and
\begin{equation*}
f((B_e)_{e\in\E^d}):=\frac{1}{|\Lambda_m|}\sum_{z\in \Lambda_m}\cD^{\sC_p}(\widetilde z,\widetilde {nx+z})
\end{equation*}
where edges that have value $1$ correspond to edges that are open. The function $f$ is a geometric average of the graph distance, the interest of considering such a function is that it is simpler to prove that all the edges have a small influence.
We can prove that the variance of $f$ is close to the original variance we aim to estimate. This is the purpose of the following lemma.
\begin{lem} \label{lem:compvar} We have for $n$ large enough
\begin{equation*}
\Var\cD^{\sC_p}(\widetilde 0,\widetilde {nx})\le 2\Var(f) +  n^{3/4}\,.
\end{equation*}
\end{lem}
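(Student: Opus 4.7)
The plan is to compare $X := \cD^{\sC_p}(\widetilde 0,\widetilde{nx})$ with the geometric average $f$ term by term, exploiting the translation invariance of the percolation distribution and the moment bound of Lemma \ref{lem:contesp}. For $z \in \Lambda_m$, set $X_z := \cD^{\sC_p}(\widetilde z,\widetilde{nx+z})$, so that $f = \frac{1}{|\Lambda_m|}\sum_{z \in \Lambda_m} X_z$.

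First I would use the elementary inequality $\Var(Y+Z) \le 2\Var(Y) + 2\Var(Z)$ with $Y = X - f$ and $Z = f$, reducing the statement to showing
\begin{equation*}
\Var(X - f) \;\le\; \tfrac12\, n^{3/4}
\end{equation*}
for $n$ large enough. Since $\Var(X-f) \le \E[(X-f)^2]$, and $X - f = \frac{1}{|\Lambda_m|}\sum_{z \in \Lambda_m}(X - X_z)$, the Cauchy--Schwarz (or Jensen) inequality gives
\begin{equation*}
\E[(X-f)^2] \;\le\; \frac{1}{|\Lambda_m|}\sum_{z \in \Lambda_m}\E[(X - X_z)^2].
\end{equation*}

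Next, the triangle inequality for the graph distance yields
\begin{equation*}
|X - X_z| \;\le\; \cD^{\sC_p}(\widetilde 0,\widetilde z) + \cD^{\sC_p}(\widetilde{nx},\widetilde{nx+z}),
\end{equation*}
so that $(X-X_z)^2 \le 2\cD^{\sC_p}(\widetilde 0,\widetilde z)^2 + 2\cD^{\sC_p}(\widetilde{nx},\widetilde{nx+z})^2$. For $z \in \Lambda_m$ one has $\|z\|_2 \le \sqrt{d}\,m \le \sqrt{d}\,n^{1/4}$, and by the translation invariance of the law of $(B_e)_{e\in\E^d}$ the random variable $\cD^{\sC_p}(\widetilde{nx},\widetilde{nx+z})$ has the same distribution as $\cD^{\sC_p}(\widetilde 0,\widetilde z)$. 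Applying Lemma \ref{lem:contesp} with $k=2$ to each of the two terms then gives
\begin{equation*}
\E[(X - X_z)^2] \;\le\; 4\log^4 n + 4\kappa d\,n^{1/2}
\end{equation*}
uniformly in $z \in \Lambda_m$. Averaging and collecting constants, $\E[(X-f)^2] = O(\log^4 n + n^{1/2})$, which is bounded by $n^{3/4}/2$ for all $n$ large enough, concluding the proof.

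This argument is essentially routine once the right ingredients are in place; the only point that requires care is choosing the scale $m \le n^{1/4}$ so that the second moments produced by Lemma \ref{lem:contesp} scale as $n^{1/2}$, strictly smaller than the target $n^{3/4}$. No serious obstacle is expected: the moment estimate from Lemma \ref{lem:contesp} already handles the fact that $\cD^{\sC_p}$ may be infinite via the regularized points, and translation invariance ensures that $f$ and $X$ have the same mean, so all the variances are well-defined and the subtraction is legitimate.
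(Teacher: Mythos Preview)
Your proof is correct and follows essentially the same route as the paper: both use translation invariance to get $\E f=\E X$, split $\Var X\le 2\E[(X-f)^2]+2\Var f$, apply Jensen/Cauchy--Schwarz to the average, bound $|X-X_z|$ via the triangle inequality by $\cD^{\sC_p}(\widetilde 0,\widetilde z)+\cD^{\sC_p}(\widetilde{nx},\widetilde{nx+z})$, and finish with Lemma~\ref{lem:contesp}. The only cosmetic difference is that you keep the $\log^4 n$ term from Lemma~\ref{lem:contesp} explicit, whereas the paper absorbs it into $O(m^2)$.
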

Let $e\in \E^d$.
We recall that $\sC^ e_p$ is the infinite connected component of $\sC_p\setminus \{e\}$. For $z\in\sZ^d$, denote by $\widetilde z ^ e $ the closest point to $z$ in $\sC^ e_p$.
For short, set
\begin{equation}\label{eq:defle}
    \ell(e):=\cD^{\sC_{p}^ e}(\widetilde{0}^ e,\widetilde{nx}^ e)- \cD^{\sC_{p}}(\widetilde{0},\widetilde{nx})\,.
\end{equation}
We will need the two following lemmas that give an upper-bound on $\ell(e)$.
\begin{lem}\label{lem:contl} There exists $\kappa_1$ depending only on $d$ such that for any $e\in\E^d$, for all $k\ge 1$
\[\E[\ell (e)^k\ind_{e\in \gamma}\ind_{\cR_e}]\le \kappa_1\log^{2k} n\]
where $\gamma$ is the geodesic between $\widetilde 0$ and $\widetilde{nx}$ in $\sC_p$.
\end{lem}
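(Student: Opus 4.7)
The plan is a local detour argument on the event $\{e\in\gamma\}\cap\cR_e$: we replace the edge $e$ in $\gamma$ by the shortest path around $e$ inside $\sC_p^e$, and bound $\ell(e)$ by the length of that detour. Write $e=\{a,b\}$ with $a$ preceding $b$ along $\gamma$, and decompose $\gamma=\gamma_1\cup\{e\}\cup\gamma_2$, where $\gamma_1$ goes from $\widetilde 0$ to $a$ and $\gamma_2$ from $b$ to $\widetilde{nx}$. Neither $\gamma_1$ nor $\gamma_2$ uses the edge $e$, so both are paths in $\sC_p\setminus\{e\}$. On $\cR_e$ we have $\widetilde 0=\widetilde 0^e\in\sC_p^e$; since $\gamma_1$ connects $\widetilde 0^e$ to $a$ inside $\sC_p\setminus\{e\}$, the vertex $a$ belongs to the same connected component of $\sC_p\setminus\{e\}$ as $\widetilde 0^e$, namely $\sC_p^e$. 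The same argument applied to $\gamma_2$ gives $b\in\sC_p^e$.

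Since $\sC_p^e$ is connected and $a,b\in\sC_p^e$, we have $\cD^{\sC_p^e}(a,b)<\infty$. Concatenating $\gamma_1$, a geodesic from $a$ to $b$ in $\sC_p^e$, and $\gamma_2$ produces a path in $\sC_p^e$ from $\widetilde 0^e=\widetilde 0$ to $\widetilde{nx}^e=\widetilde{nx}$ of length $|\gamma|-1+\cD^{\sC_p^e}(a,b)$. Combined with $|\gamma|=\cD^{\sC_p}(\widetilde 0,\widetilde{nx})$, this yields
\[\ell(e)\le \cD^{\sC_p^e}(a,b)-1\qquad\text{on}\ \{e\in\gamma\}\cap\cR_e,\]
and therefore
\[\E[\ell(e)^k\ind_{e\in\gamma}\ind_{\cR_e}]\le \E\bigl[\cD^{\sC_p^e}(a,b)^k\,\ind_{a,b\in\sC_p^e}\bigr].\]

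It remains to obtain a tail estimate for the right-hand side. Adapting Theorem \ref{thm:antal} to the configuration in which the edge $e$ is deterministically closed yields constants $A',c',\beta'$ such that
\[\Prb\bigl(\cD^{\sC_p^e}(a,b)\ge m,\,a,b\in\sC_p^e\bigr)\le A'\exp(-c'm)\qquad\text{for all }m\ge\beta'.\]
Splitting the expectation at $m=\log^2 n$ in the same spirit as in the proof of Lemma \ref{lem:contesp} then yields $(\log^2 n)^k$ plus an exponentially small remainder, giving the desired bound $\kappa_1\log^{2k} n$. The one delicate point is the adaptation of Antal-Pisztora to a configuration with one deterministically closed edge; I expect it to be essentially immediate from the renormalization proof of Theorem \ref{thm:antal} (which easily absorbs a single pre-closed edge), or alternatively to follow from the shell/pivot constructions of \cite{CerfDembinreg} that provide precisely this sort of exponential control on local detours within the infinite cluster.
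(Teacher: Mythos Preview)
Your detour argument is exactly the paper's first step: on $\{e\in\gamma\}\cap\cR_e$ one has $a,b\in\sC_p^e$ and $\ell(e)\le \cD^{\sC_p\setminus\{e\}}(a,b)$, so
\[
\E[\ell(e)^k\ind_{e\in\gamma}\ind_{\cR_e}]\le \E\bigl[\cD^{\sC_p\setminus\{e\}}(a,b)^k\,\ind_{a,b\in\sC_p^e}\bigr].
\]
Where you diverge is in bounding the right-hand side. You appeal to an ``Antal--Pisztora with one edge forced closed'' and leave it as a robustness claim. That is true, but the paper carries it out in one line with an independence trick you are missing: the random variable $X:=\cD^{\sC_p\setminus\{e\}}(a,b)^k\,\ind_{a,b\in\sC_p^e}$ does not depend on $B_e$, hence
\[
\E[X]=\frac{1}{1-p}\,\E[X\,\ind_{B_e=0}]\le \frac{1}{1-p}\,\E\bigl[\cD^{\sC_p}(a,b)^k\,\ind_{a,b\in\sC_p}\bigr],
\]
since on $\{B_e=0\}$ the graphs $\sC_p\setminus\{e\}$ and $\sC_p$ coincide. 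Now the standard estimate (Lemma~\ref{lem:contesp}, or Theorem~\ref{thm:antal} directly, with $\|a-b\|_2=1$) gives the bound $\le \tfrac{2}{1-p}\log^{2k}n$. This both proves the adaptation you asserted and removes any need to revisit the renormalization proof of Antal--Pisztora or to invoke \cite{CerfDembinreg}. In short: your argument is correct but incomplete at the last step; the paper closes that step with the $B_e$-independence identity rather than by reproving the chemical-distance estimate in a perturbed environment.
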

The following lemma upperbounds the total influence of the edges that change the regularized points.
\begin{lem}\label{lem:contsum} We have for $n$ large enough
\[\sum_{k=1}^{\infty}\E[\ell(e_k)^2\ind_{\cR_{e_k}^c}]\le \log^{6d} n\,.\]
\end{lem}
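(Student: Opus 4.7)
The plan is to exploit the locality of the event $\cR_e^c$: it occurs only when $e$ is a cut edge separating $\widetilde 0$ or $\widetilde{nx}$ from infinity in $\sC_p$, restricting $e$ to polylogarithmically small neighborhoods on a high-probability event. Since $\ind_{\cR_e^c}\le\ind_{\widetilde 0\neq\widetilde 0^e}+\ind_{\widetilde{nx}\neq\widetilde{nx}^e}$ and the two contributions are handled symmetrically, I focus on the edges violating the condition at $\widetilde 0$. The key observation is that $\sC_p^e\subset\sC_p$, so $\widetilde 0\neq\widetilde 0^e$ forces $\widetilde 0$ into a finite component $F_e$ of $\sC_p\setminus\{e\}$; thus $e$ is a cut edge of $\sC_p$ separating $\widetilde 0$ from infinity, and both endpoints of $e$ lie within graph distance $\mathrm{diam}(F_e)+1$ of $\widetilde 0$.

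Setting $R=\log^2 n$, I define the good event $G_n$ by: (i) $|\widetilde 0|_\infty\le R$ and $|\widetilde{nx}-nx|_\infty\le R$; (ii) for every $e\in\E^d$, each finite component of $\sC_p\setminus\{e\}$ meeting $\Lambda_R\cup(nx+\Lambda_R)$ has diameter at most $R$; (iii) for any edge $e$ with at least one endpoint in $\Lambda_{3R}\cup(nx+\Lambda_{3R})$ and any $u,v\in\sC_p^e\cap(\Lambda_{3R}\cup(nx+\Lambda_{3R}))$, $\cD^{\sC_p^e}(u,v)\le\beta R$, with $\beta$ as in Theorem \ref{thm:antal}. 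Theorems \ref{thm:hole}, \ref{thm:nosmallclus} and \ref{thm:antal}, combined with union bounds over the $O(R^d)$ relevant edges, give $\Prb(G_n^c)\le n^{-100}$ for $n$ large. On $G_n$, any edge $e$ with $\widetilde 0\neq\widetilde 0^e$ has both endpoints in $\Lambda_{3R}$ by the locality observation, so there are at most $CR^d$ such edges.

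For such $e$ on $G_n$, I claim $|\ell(e)|\le 3\beta R$. Let $\gamma$ be a geodesic from $\widetilde 0$ to $\widetilde{nx}$ in $\sC_p$; since $e$ is a cut edge of $\widetilde 0$ from infinity and $\widetilde{nx}$ lies outside $F_e$ on $G_n$, $\gamma$ traverses $e$. Writing $\gamma=\gamma_1\cdot e\cdot\gamma_2$ with $b$ the endpoint of $e$ on the infinite side, one has $\gamma_2\subset\sC_p^e$; concatenating a $\sC_p^e$-path from $\widetilde 0^e$ to $b$ of length $\le\beta R$ via (iii) with $\gamma_2$ and a possible detour of length $\le\beta R$ to $\widetilde{nx}^e$ gives
$$\cD^{\sC_p^e}(\widetilde 0^e,\widetilde{nx}^e)\le\cD^{\sC_p}(\widetilde 0,\widetilde{nx})+2\beta R,$$
so $\ell(e)\le 2\beta R$. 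Conversely, $\sC_p^e\subset\sC_p$ and the triangle inequality give
$$\cD^{\sC_p}(\widetilde 0,\widetilde{nx})\le\cD^{\sC_p}(\widetilde 0,\widetilde 0^e)+\cD^{\sC_p^e}(\widetilde 0^e,\widetilde{nx}^e)+\cD^{\sC_p}(\widetilde{nx}^e,\widetilde{nx}),$$
with the outer terms $\le 2\beta R$ by the same surgery (going $\widetilde 0\to a\to b\to\widetilde 0^e$ through $F_e$ then $\sC_p^e$), so $\ell(e)\ge-3\beta R$.

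Combining, on $G_n$ one obtains
$$\sum_k\ell(e_k)^2\ind_{\cR_{e_k}^c}\le 2\cdot CR^d\cdot(3\beta R)^2\le\log^{2d+4}n\le\log^{6d}n$$
for $n$ large, since $d\ge 2$. The contribution on $G_n^c$ is absorbed by Cauchy--Schwarz: Lemma \ref{lem:contesp} bounds the moments of $\cD^{\sC_p}$ (and of $\cD^{\sC_p^e}$ by conditioning on $t_e$), while the number of offending edges is bounded by the size of a finite cluster near $\widetilde 0$ or $\widetilde{nx}$ with polynomial moments by Theorem \ref{thm:nosmallclus}, yielding $\E[(\sum_k\ell(e_k)^2\ind_{\cR_{e_k}^c})^2]\le\mathrm{poly}(n)$, which combined with $\Prb(G_n^c)^{1/2}\le n^{-50}$ is negligible. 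The main obstacle is the pointwise estimate $|\ell(e)|=O(\log^2 n)$ on $G_n$: it requires geodesic surgery in both $\sC_p$ and $\sC_p^e$ together with a uniform Antal--Pisztora bound holding simultaneously for all $O(R^d)$ relevant perturbed clusters, and a careful design of $G_n$ that accommodates this union bound.
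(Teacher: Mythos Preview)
Your route is genuinely different from the paper's. The paper works edge by edge in expectation: it splits the edges according to whether $\min(\|w\|_\infty,\|nx-w\|_\infty)\ge\log^2 n$ (where $w$ is an endpoint of $e$). For far edges it applies Cauchy--Schwarz, $\E[\ell(e)^2\ind_{\cR_e^c}]\le(\E[\ell(e)^4])^{1/2}\Prb(\cR_e^c)^{1/2}$, with the crude bound $\E[\ell(e)^4]\le Cn^4$ and $\Prb(\cR_e^c)$ decaying exponentially in the distance via Theorems \ref{thm:nosmallclus} and \ref{thm:hole}. For near edges it performs a surgery much like yours to obtain $\E[\ell(e)^2\ind_{\cR_e^c}]\le C\log^4 n$ per edge, and then sums over the $O(\log^{2d}n)$ near edges. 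Your approach instead packages the surgery into a single pointwise good event $G_n$, which yields the same exponent $\log^{2d+4}n$ on $G_n$; the trade-off is that all the work the paper does for far edges must reappear in your treatment of $G_n^c$.

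Two steps need tightening. First, condition (ii) is stated for \emph{every} $e\in\E^d$, so a ``union bound over the $O(R^d)$ relevant edges'' is not valid: a priori nothing confines the offending $e$ to a box. What does work is a union bound over the finite-side endpoint $u$ of $e$: if the finite component of $\sC_p\setminus\{e\}$ meets $\Lambda_R$ and $\|u\|_\infty>2R$, then that component has diameter at least $\|u\|_\infty-R$, and (after conditioning on $B_e=0$) Theorem \ref{thm:nosmallclus} gives probability $\le C\exp(-c\|u\|_\infty)$; summing over $u\in\sZ^d$ converges. Second, your handling of $G_n^c$ is not justified: knowing that the number of offending edges has good moments and that each individual $\ell(e)$ has good moments does not by itself yield $\E[(\sum_k\ell(e_k)^2\ind_{\cR_{e_k}^c})^2]\le\mathrm{poly}(n)$, because the quantities $\cD^{\sC_p^{e}}(\widetilde 0^{e},\widetilde{nx}^{e})$ live in different graphs as $e$ varies and you are implicitly taking a supremum over a random set of edges. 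The cleanest fix is to revert to an edge-by-edge bound here, writing $\sum_k\E[\ell(e_k)^2\ind_{\cR_{e_k}^c}\ind_{G_n^c}]\le\sum_k(\E[\ell(e_k)^4])^{1/2}\Prb(\cR_{e_k}^c\cap G_n^c)^{1/2}$ and controlling $\Prb(\cR_{e_k}^c\cap G_n^c)$ by $\Prb(G_n^c)$ for edges near $0$ or $nx$ and by $\Prb(\cR_{e_k}^c)$ elsewhere --- which is exactly the paper's decomposition. (Minor: condition (iii) as written allows $u\in\Lambda_{3R}$, $v\in nx+\Lambda_{3R}$, which is impossible; you mean $u,v$ in the same box, and the bound should be $C\beta R$ rather than $\beta R$ since $\|u-v\|_2$ can be of order $R\sqrt d$.)
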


We will also need the following lemma.
\begin{lem}\label{lem:controlV_k} We have for $n$ large enough
\begin{equation*}
\sum_{k=1}^\infty\E(|V_k|)^2\le n^{15/16}.
\end{equation*}
\end{lem}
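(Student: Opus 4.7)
The plan is to reduce $\sum_k(\E|V_k|)^2$ to $\sum_k(\E|\Delta_k f|)^2$ and then exploit the geometric averaging that defines $f$. A direct computation with Bernoulli variables (identical to the one appearing in the proof of Lemma~\ref{lem:logsobber}) shows $|V_k|\le\max(p,1-p)\,\E[|\Delta_k f|\mid\cF_{k-1}]$, whence $\E|V_k|\le C_p\E|\Delta_k f|$. It therefore suffices to prove $\sum_k(\E|\Delta_k f|)^2\le n^{15/16}$ for $n$ large.

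Writing $D_z:=\cD^{\sC_p}(\widetilde z,\widetilde{nx+z})$ with associated geodesic $\gamma_z$, we have $\Delta_k f=\frac{1}{|\Lambda_m|}\sum_z\Delta_k D_z$. Let $\cR_{e_k}^z$ denote the shifted version of $\cR_{e_k}$, i.e. the event that $\widetilde z$ and $\widetilde{nx+z}$ are unchanged when $e_k$ is closed. On $\cR_{e_k}^z$ one has $|\Delta_k D_z|\le\ell_z(e_k)\ind_{e_k\in\gamma_z}$, with $\ell_z$ the analogue of $\ell$ for the shifted pair. I would split $\E|\Delta_k f|\le A_k+B_k$, where
\[A_k:=\frac{1}{|\Lambda_m|}\sum_{z\in\Lambda_m}\E\bigl[\ell_z(e_k)\ind_{e_k\in\gamma_z}\ind_{\cR_{e_k}^z}\bigr]\]
and $B_k$ is the corresponding contribution on $(\cR_{e_k}^z)^c$.

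For $A_k$, translation invariance rewrites it as $\frac{1}{|\Lambda_m|}\sum_{w\in e_k+\Lambda_m}b_w$ with $b_w:=\E[\ell(w)\ind_{w\in\gamma}\ind_{\cR_w}]$. Summing in $k$, changing the order of summation, and applying Cauchy--Schwarz to $\sum_w b_w=\E[\sum_{w\in\gamma}\ell(w)\ind_{\cR_w}]$ together with Theorem~\ref{thm:cont} and $\E|\gamma|\le Cn$ yields a total bound $\sum_k A_k\le C_1 n$. Independently, Lemma~\ref{lem:contl} gives $b_w\le\sqrt{\kappa_1\log^4 n}\sqrt{\Prb(w\in\gamma)}$; combining a further Cauchy--Schwarz on the inner sum with Lemma~\ref{lem:interbox} (applicable since $m=\lfloor n^{1/4}\rfloor\ge\log^2 n$ for $n$ large) produces a pointwise bound $A_k\le C_2\log^2 n/m^{(d-1)/2}=C_2\log^2 n/n^{(d-1)/8}$. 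The crude inequality $\sum_k A_k^2\le(\max_k A_k)(\sum_k A_k)$ then gives $\sum_k A_k^2\le C n^{1-(d-1)/8}\log^2 n\le C n^{7/8}\log^2 n$, well below $\tfrac12 n^{15/16}$.

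For $B_k$, Jensen's inequality on the convex combination $\frac{1}{|\Lambda_m|}\sum_z(\cdot)$ gives $B_k^2\le\frac{1}{|\Lambda_m|}\sum_z\E[|\Delta_k D_z|^2\ind_{(\cR_{e_k}^z)^c}]$. Since $|\Delta_k D_z|$ is almost surely bounded by $\ell_z(e_k)$ (after forcing $e_k$ open), applying Lemma~\ref{lem:contsum} to each shift $z$ yields $\sum_k\E[|\Delta_k D_z|^2\ind_{(\cR_{e_k}^z)^c}]\le\log^{6d} n$ uniformly in $z$, so $\sum_k B_k^2\le\log^{6d} n$ is negligible. The key step is the pointwise bound on $A_k$: the gain $1/m^{(d-1)/2}$ comes entirely from Lemma~\ref{lem:interbox}, and in dimension two the exponent $7/8$ leaves only a thin margin to the target $15/16$, so the logarithmic losses have to be absorbed carefully.
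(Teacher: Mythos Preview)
Your argument is correct and matches the paper's proof: both bound $\E|V_k|$ by $C\,\E|\Delta_{e_k} f|$, split according to $\cR_e$, obtain the pointwise bound $C\log^2 n\,m^{(1-d)/2}$ from Lemmas~\ref{lem:contl} and~\ref{lem:interbox} and the total bound $\sum_k\E|V_k|\le Cn$ from Theorem~\ref{thm:cont} and Lemma~\ref{lem:contsum}, and conclude via $\sum_k a_k^2\le(\max_k a_k)\sum_k a_k$. The only cosmetic difference is that you peel off the $\cR_e^c$ contribution as a separate sum $\sum_k B_k^2$ (handled directly by Jensen and Lemma~\ref{lem:contsum}), whereas the paper absorbs it into the pointwise bound on $\E|V_k|$.
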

Before proving all these lemmas, let us show how they imply Theorem \ref{thm:main}.
\begin{proof}[Proof of Theorem \ref{thm:main}]
If $\Var(f)\le n^{31/32}$, then thanks to Lemma \ref{lem:compvar}, the result follows.
Otherwise, we have thanks to Lemma \ref{lem:ineqsob}
\begin{equation*}
\Var(f)\log \left(\frac{n^{31/32}}{\sum_{k=1}^\infty\E(|V_k|)^2}\right)\le \sum_{k=1}^\infty \Ent(V_k^2)
\end{equation*}
and using Lemma \ref{lem:controlV_k} we get
\begin{equation}\label{eq:eqvar}
\Var(f)\le\frac{32}{\log n} \sum_{k=1}^\infty \Ent(V_k^2).
\end{equation}
It is easy to check thanks to Lemma \ref{lem:contesp} that $\E(f^4)<\infty$.
Finally using Lemma \ref{lem:logsobber}, we get
\begin{align*}
\sum_{k=1}^\infty \Ent(V_k^2)\le C\sum_{k=1}^\infty \E((\Delta_{e_k} f) ^2)&\le C\sum_{k=1}^\infty \frac{1}{|\Lambda_m|}\sum_{z\in \Lambda_m}\E((\Delta_{e_k} \cD^{\sC_p}(\widetilde z,\widetilde {nx+z}))^2)
\end{align*}
where we use Cauchy-Schwarz in the second inequality.
Using the invariance by translation in distribution, it follows that 
\begin{align*}
\sum_{k=1}^\infty \Ent(V_k^2)\le C\sum_{k=1}^\infty \E((\Delta_{e_k} \tau) ^2)
\end{align*}
where for short we write $\tau=\cD^{\sC_p}(\widetilde 0,\widetilde {nx})$. Let $e\in\E^d$.
Note that $\Delta_e \tau $ is independent of $B_e$, it follows that
\begin{equation*}
     \E((\Delta_e\tau)^2)=\frac{1}{p} \E((\Delta_e\tau)^2\ind_{B_e=1})\,.
\end{equation*}
Let us denote by $\gamma$ the geodesic between $\widetilde 0$ and $\widetilde {nx}$ (if there are several possible choices, we choose one according to a deterministic rule). We recall that $\cR_e$ was defined in \eqref{eq:defFe} as the event where closing the edge $e$ does not modify the regularized points. 
Let us assume that we are on the event $\cR_e$ and that $e$ is originally open and outside the geodesic $\gamma$, then closing $e$ has no impact on the geodesic and $\Delta_e\tau=0$. It yields that
\[(\Delta_e\tau)^2\ind_{B_e=1}\ind_{\cR_e}=(\Delta_e\tau)^2 \ind_{\cR_e}\ind_{e\in\gamma}\,.\]
Besides, we have 
\[\Delta_e \tau\ind_{B_e=1}=\ell(e)\ind_{B_e=1}\,.\]
Hence, we have
\begin{equation*}
\begin{split}
  \sum_{k=1}^\infty \Ent(V_k^2)&\le \frac{C}{p}\sum_{k=1}^\infty \E((\Delta_{e_k} \tau) ^2\ind_{\cR_{e_k}}\ind_{e_k\in\gamma})+\E((\Delta_{e_k} \tau) ^2\ind_{\cR_{e_k}^c}\ind_{B(e_k)=1})  \\
  &\le \frac{C}{p}
    \E\left[\sum_{e\in\gamma}(\cD^{\sC_{p}\setminus\{e\}}(\widetilde{0},\widetilde{nx})- \cD^{\sC_{p}}(\widetilde{0},\widetilde{nx}))^2\ind_{\cR_e}\right]+\frac{C}{p}\sum_{k=1}^\infty\E(\ell(e_k) ^2\ind_{\cR_{e_k}^c})\,.
\end{split}
\end{equation*}
Thanks to Theorem \ref{thm:cont} and Lemma \ref{lem:contsum}, we have for $n$ large enough
\begin{equation*}
\sum_{k=1}^\infty \Ent(V_k^2)\le 2c_0\frac{C}{p_c} n\,.
\end{equation*}
Using inequality \eqref{eq:eqvar} and Lemma \ref{lem:compvar}, the result follows.

\end{proof}
Let us now prove the lemmas.
\begin{proof}[Proof of Lemma \ref{lem:compvar}]
We have
\[|\cD^{\sC_p}(\widetilde 0,\widetilde {nx})-\cD^{\sC_p}(\widetilde z,\widetilde {z+nx})|\le \cD^{\sC_p}(\widetilde 0,\widetilde z)+ \cD^{\sC_p}(\widetilde {nx},\widetilde {z+nx})\,.\]
It is easy to check using the invariance by translation in distribution that
\[\E(f)=\E (\cD^{\sC_p}(\widetilde 0,\widetilde {nx}))\,.\]
We recall that $m\le n^{1/4}$.
It follows that
\begin{equation*}
    \begin{split}
        \Var( \cD^{\sC_p}(\widetilde 0,\widetilde {nx}))&=\E((\cD^{\sC_p}(\widetilde 0,\widetilde {nx})-\E (f))^2)\le 2\E((f-\cD^{\sC_p}(\widetilde 0,\widetilde {nx}))^2) + 2\Var (f)\\
        &\le \frac{4}{|\Lambda_m|}\sum_{z\in \Lambda_m}\E (\cD^{\sC_p}(\widetilde 0,\widetilde {z})^2)+ 2\Var(f)\\
        &\le 2\Var(f)+ 4d^2\kappa m^2\le 2\Var(f) + n^{3/4}
    \end{split}
\end{equation*}
where in the second to last inequality we use Cauchy Schwarz inequality and Lemma \ref{lem:contesp}.
The result follows.
\end{proof}
\begin{proof}[Proof of Lemma \ref{lem:controlV_k}] For short write $\tau_z=\cD^{\sC_p}(\widetilde z,\widetilde {z+nx}) $. We have
\begin{equation}\label{eq:contVk}
   \begin{split}
        \E[|V_k|]&\le \frac{1}{|\Lambda_m|}\sum_{z\in\Lambda_m}\E|\E(\tau_z|\cF_k)- \E(\tau_z|\cF_{k-1})|\\
    &\le \frac{2}{|\Lambda_m|}\sum_{z\in\Lambda_m}\E |\Delta_{e_k}\tau_z|  \\
    &= \frac{2}{p|\Lambda_m|}\sum_{z\in\Lambda_m}\E[ |\Delta_{e_k-z}\tau_0|\ind_{B(e_k-z)=1}]\\
    &=\frac{2}{p|\Lambda_m|}\sum_{z\in\Lambda_m}(\E [|\ell(e_k-z)|\ind_{\cR_{e_k-z}^c}] +\E [\ell(e_k-z)\ind_{\cR_{e_k-z}}\ind_{e_k-z\in \gamma}] 
   \end{split}
\end{equation}
where we used similar arguments than in the proof of Theorem \ref{thm:cont}, in particular that $\Delta_e \tau_z$ is independent of $B_e$.
Using Lemma \ref{lem:contsum}, we get
\begin{equation}\label{eq:contV2}
    \sum_{z\in\Lambda_m}\E [|\ell(e_k-z)|\ind_{\cR_{e_k-z}^c}] \le \sum_{k=1}^\infty \E[\ell(e_k)^2\ind_{\cR_{e_k}^c}]\le \log^{6d} n\,.
\end{equation}
Using Cauchy-Schwarz inequality and Lemma \ref{lem:contl}, we have
\begin{equation}\label{eq:contV3}
\begin{split}
    \sum_{z\in\Lambda_m}\E [\ell(e_k-z)\ind_{\cR_{e_k-z}}\ind_{e_k-z\in \gamma}] &\le \left( \sum_{z\in\Lambda_m}\E [\ell(e_k-z)^2\ind_{\cR_{e_k-z}}\ind_{e_k-z\in \gamma}]\right)^{1/2}\left( \sum_{z\in\Lambda_m}\E [\ind_{e_k-z\in \gamma}]\right)^{1/2}\\
    &\le \log ^2n \sqrt{\kappa_1|\Lambda_m|}\sqrt{\E|\gamma\cap (\Lambda_m + e_k)|}\,.
    \end{split}
\end{equation}
Using Lemma \ref{lem:interbox} and the inequalities \eqref{eq:contVk}, \eqref{eq:contV2} and \eqref{eq:contV3}, it follows that for $n$ large enough
\begin{equation*}
     \E[|V_k|]\le \frac{2^d}{p_c}\sqrt{\alpha\kappa_1}\log^2 n\, m ^{(1-d)/2}\,.
\end{equation*}
Besides, using Theorem \ref{thm:cont}, Lemma \ref{lem:contsum} and inequality, we have for $n$ large enough
\eqref{eq:contVk}
\begin{equation*}
    \begin{split}
        \sum_{k=1}^\infty \E[|V_k|]&\le  \frac{2}{p}\E \left[\sum_{e\in \gamma}(\cD^{\sC_{p}\setminus\{e\}}(\widetilde{0},\widetilde{nx})- \cD^{\sC_{p}}(\widetilde{0},\widetilde{nx}))^2\ind_{\cR_{e}}\right] + \frac{2}{p}\sum_{k=1}^\infty \E[|\ell(e_k)|\ind_{\cR_{e_k}^c}] \le \frac{4}{p_c} c_0 n\,.
    \end{split}
\end{equation*}
Finally, combining the two previous inequalities we get for some constant $C$ for $n$ large enough
\begin{equation*}
    \begin{split}
        \sum_{k=1}^\infty  \E[|V_k|]^2\le C\log^2 n\, m ^{(1-d)/2}\sum_{k=1}^\infty  \E[|V_k|]\le  \frac{4}{p}C c_0n\,\log^2 n\,  m ^{(1-d)/2}\le n^{15/16}
    \end{split}
\end{equation*}
where we use that $m^{(d-1)/2}\ge n ^{1/8}/2$ (we recall that $m$ is the largest integer such that $m\le n^{1/4}$).
\end{proof}

\subsection{Upper bounding $\ell(e)$}
Theorem \ref{thm:main} trivially holds for $p=1$. We here work for $p\in(p_c,1)$.
\begin{proof}[Proof of Lemma \ref{lem:contl}]
Let $e=\{w,z\}\in \E^d$ and $k\ge 1$.
We have
\begin{equation*}
    \begin{split}
     \E[\ell (e)^k\ind_{\cR_e}\ind_{e\in \gamma}]&\le \E[\cD^{\sC_p\setminus \{e\}}(w,z)^k\ind_{w,z\in\sC^ e_p}]\\&=\frac{1}{1-p}\E[\cD^{\sC_p\setminus \{e\}}(w,z)^k\ind_{w,z\in\sC^ e_p}\ind_{B_e=0}]\\
     &\le \frac{1}{1-p}\E[\cD^{\sC_p}(w,z)^k\ind_{w,z\in\sC_p}]\le \frac{2\log^{2k} n}{1-p}
     \end{split}
\end{equation*}
where we use that $\cD^{\sC_p\setminus \{e\}}(w,z)^k\ind_{w,z\in\sC^ e_p}$ is independent of $B_e$. In the last inequality we used Lemma \ref{lem:contesp}.

\end{proof}
\begin{proof}[Proof of Lemma \ref{lem:contsum}]
Let $e=\{w,z\}\in \E^d$. Let $\sC^ e_p$ be the infinite connected component of $\sC_p\setminus \{e\}$. For $y\in\sZ^d$, denote by $\sC^e_p(y)$ the connected component of $y$ in the graph $\sC_p(y)\setminus\{e\}$. 
The following estimate will be used several time in what follows
\begin{equation}\label{eq:contl4}
    \begin{split}
        \E[\ell (e) ^4]&=\E[(\cD^{\sC_{p}\setminus\{e\}}(\widetilde{0}^ e,\widetilde{nx}^ e)- \cD^{\sC_{p}}(\widetilde{0},\widetilde{nx}))^4]\\&\le 8( \E[\cD^{\sC_{p}\setminus\{e\}}(\widetilde{0}^ e,\widetilde{nx}^ e)^4] +\E[\cD^{\sC_{p}}(\widetilde{0},\widetilde{nx})^4])\\&\le 8\frac{2-p}{1-p}\E[\cD^{\sC_{p}}(\widetilde{0},\widetilde{nx}))^4]\le C_2n^4
    \end{split}
\end{equation}
where we use Lemma \ref{lem:contesp} in the last inequality and $C_2$ is a constant depending on $p$.
 Set $$l:= \|w\|_\infty \,.$$
 Note that if $\widetilde 0\ne\widetilde 0^ e$ then either $\sC^ e_p(w)$ is finite and contains $\widetilde 0$ or $\sC^ e_p(z)$ is finite and contains $\widetilde 0$. 
Let us compute the following probability
\begin{equation*}
    \begin{split}
        \Prb(\widetilde 0\neq\widetilde 0^ e)&\le
         \Prb(\widetilde 0\notin \Lambda_{l/2})+ \Prb(\widetilde 0\in \Lambda_{l/2},\widetilde 0\neq\widetilde 0^ e)\\
        &\le \Prb(\sC_p(0)\cap \Lambda_{l/2}=\emptyset)+ \Prb\left(w\notin \sC_p^ e,\sC^ e_p(z)\cap (w+\Lambda_{l/2})\ne\emptyset\right)+\Prb\left(z\notin \sC_p^ e,\sC^ e_p(z)\cap (z+\Lambda_{l/2})\ne\emptyset\right)\\&\le A_3\exp\left(-A_4 \frac{l}{2}\right)+ \frac{2 A_1}{1-p}\exp\left(-A_2\frac{l }{2}\right)
    \end{split}
\end{equation*}
where we used Theorems \ref{thm:hole} and \ref{thm:nosmallclus}.
It follows that using the previous inequality and inequality \eqref{eq:contl4}
\begin{equation*}
    \begin{split}
        \E[\ell(e)^2\ind_{\widetilde 0\neq\widetilde 0^ e}]&\le \sqrt{\E[\ell(e)^4]\Prb(\widetilde 0\neq\widetilde 0^ e)}\le \sqrt{2C_2 A_1 (1-p) ^{-1}}n^2\exp(-A_2 \|w\|_\infty /4)
    \end{split}
\end{equation*}
and similarly we have
\begin{equation*}
    \begin{split}
        \E[\ell(e)^2\ind_{\widetilde {nx}\neq\widetilde{ nx}^ e}]
        &\le \sqrt{2C_2 A_1 (1-p) ^{-1}}n^2\exp(-A_2 \|nx-w\|_\infty/4)\,.
    \end{split}
\end{equation*}
Let us assume that $\min( \|w\|_\infty,\|nx-w\|_\infty)\ge \log ^2 n$.
We have for $n$ large enough
\begin{equation*}
\begin{split}
    \E[\ell(e)^2\ind_{\cR_e^c}]&\le  \E[\ell(e)^2\ind_{\widetilde 0 \ne \widetilde 0^ e}]+\E[\ell(e)^2\ind_{ \widetilde {nx}\ne \widetilde{nx}^ e}]\\
    &\le \exp(-A_2(\min( \|w\|_\infty,\|nx-w\|_\infty)) ^{2}/4)\,.
    \end{split}
\end{equation*}
Let us now assume that $\min( \|w\|_\infty,\|nx-w\|_\infty)\le \log ^2 n$. We can assume that $\|w\|_\infty\le \log ^2 n$. The case where $\|nx-w\|_\infty\le \log  ^2 n$ can be treated similarly. On the event $\{\widetilde 0 \ne \widetilde 0^ e, \widetilde {nx}=\widetilde{nx}^ e\}$,
we must have $e\in\gamma$ otherwise it would contradict that $\widetilde 0 \ne \widetilde 0^ e$ ($\widetilde{nx}$ cannot be connected to $\widetilde 0$ in $\sC^ e_p$). Besides, we have either $\widetilde {nx} \in\sC_p^ e(w)$ or $\widetilde {nx} \in\sC_p^ e(z)$.
If $\widetilde {nx} \in\sC_p^ e(w)$, we have
\[\cD^{\sC^ e_p}(\widetilde 0 ^ e,\widetilde {nx})\le \cD^{\sC^ e_p}(\widetilde 0 ^ e,w)+ \cD^{\sC^ e_p}(w,\widetilde {nx})=\cD^{\sC^ e_p}(\widetilde 0 ^ e,w)+ \cD^{\sC_p}(w,\widetilde {nx})\,\]
and 
\[\cD^{\sC_p}(\widetilde 0 ,\widetilde {nx})=\cD^{\sC_p}(\widetilde 0 ,w)+\cD^{\sC_p}(w ,\widetilde {nx})\,.\]
Finally, we have
\[\ell(e)^2\ind_{\widetilde 0 \ne \widetilde 0^ e}\ind_{ \widetilde {nx}=\widetilde{nx}^ e}\le (\cD^{\sC^ e_p}(\widetilde 0 ^ e,w)- \cD^{\sC_p}(\widetilde 0 ,w))^2\ind_{\widetilde 0 ^ e\in\sC_p^ e(w)}+(\cD^{\sC^ e_p}(\widetilde 0 ^ e,z)- \cD^{\sC_p}(\widetilde 0 ,z))^2\ind_{\widetilde 0 ^ e\in\sC_p^ e(z)}\,.\]
Using Lemma \ref{lem:contesp} and similar arguments as in the proof of Lemma \ref{lem:contl}, it yields that
\begin{equation*}
\begin{split}
    \E[\ell(e)^2\ind_{\widetilde 0 \ne \widetilde 0^ e}\ind_{ \widetilde {nx}=\widetilde{nx}^ e}]&\le 2\E[\mathcal D^{\sC^ e_p}(\widetilde 0 ^ e,z)^2\ind_{\widetilde 0^ e \in\sC_p^ e(z)}]+2\E[\mathcal D^{\sC^ e_p}(\widetilde 0 ^ e,w)^2\ind_{\widetilde 0^ e \in\sC_p^ e(w)}]\\
    &+2\E[\mathcal D^{\sC_p}(\widetilde 0 ,w)^2\ind_{\widetilde 0^ e \in\sC_p^ e(w)}]+2\E[\mathcal D^{\sC_p}(\widetilde 0 ,z)^2\ind_{\widetilde 0^ e \in\sC_p^ e(z)}]\\
   & \le 2\E[\mathcal D^{\sC^ e_p}(\widetilde 0 ^ e,\widetilde z^ e)^2]+2\E[\mathcal D^{\sC^ e_p}(\widetilde 0 ^ e,\widetilde w ^ e)^2]+2\E[\mathcal D^{\sC_p}(\widetilde 0 ,\widetilde w)^2]+2\E[\mathcal D^{\sC_p}(\widetilde 0 ,\widetilde z)^2]\\
    &\le \frac{8\kappa_1\log  ^{4}n}{1-p}\,.
    \end{split}
\end{equation*}
As a result, we have for $n$ large enough
\begin{equation*}
    \E[\ell(e)^2\ind_{\cR_e^c}]\le  \E[\ell(e)^2\ind_{\widetilde nx \ne \widetilde nx^ e}]+ \E[\ell(e)^2\ind_{\widetilde 0 \ne \widetilde 0^ e}\ind_{ \widetilde {nx}=\widetilde{nx}^ e}]\le \frac{10\kappa_1\log ^4n}{1-p} \,.
\end{equation*}
Finally, we have
\begin{equation*}
    \sum_{e\in\E^d}\E[\ell(e)^2\ind_{\cR_e^c}]\le\sum_{e\in (\Lambda_{\log ^2 n }\cup (nx+\Lambda_{\log ^2 n }))}\frac{10\kappa_1\log ^4 n }{1-p}+ \sum_{j\ge \log ^2n }c_d j ^{d-1}\exp(-A_2 j /4)\,
\end{equation*}
where $c_d$ is a constant depending only on $d$.
It follows that for $n$ large enough, we have 
\begin{equation*}
    \sum_{e\in\E^d}\E[\ell(e)^2\ind_{\cR_e^c}]\le \log  ^{6d}n \,.
\end{equation*}
The result follows.
\end{proof}
\subsection{Proof of Theorem \ref{thm:cont}}
 Let $p>p_c$. Let $x\in \sZ^d$ and $n\ge 1$. Let $\gamma$ be the geodesic between $\widetilde 0$ and $\widetilde {nx} $.
In this section, we will use results of \cite{CerfDembinreg}. The Proposition 3.5 states that for any path we can associate to each edge in the bulk of the path a shell. A shell is a set of boxes with good connectivity properties surrounding the edge. It is not important to understand the precise definition of a shell. What is important to understand is that this shell, thanks to the good connectivity of the boxes of the shells, enables us to build a bypass of the edge $e$ in a neighborhood of the shell of well-controlled length. In particular, the Proposition 3.6 enables to bound the size of this bypass of the edge $e$ by $\kappa_0|\shell(e)|$. Knowing that there exists a bypass of length at most $\kappa_0|\shell(e)|$ enables us to upper-bound the number of extra edges we need to join $\widetilde 0$ and $\widetilde {nx}$ when we close the edge $e$.
Finally, the Proposition 4.5 enables us to have with very high probability a good control on the average size of the shells built in Proposition 3.5.
Let $\ep>0 $ small enough depending on $p$ and such that $p-\ep>p_c$. Apply Proposition 3.5 in \cite{CerfDembinreg} to $p-\ep,p$ and $\gamma$, there exists a family $(\shell(e),e\in \overline \gamma)$ where $\overline \gamma = \gamma \setminus ((\Lambda_{\mathrm N}+ \widetilde 0)\cup (\Lambda_{\mathrm N}+\widetilde {nx})$ where $\mathrm N$ is a random variable (that corresponds to $N_{M(\gamma)}$ in Proposition 3.5). 
\begin{rk}
Note that here we will build $(p-\ep)$ bypass, so we implicitly work here with a coupling of the bond percolation of parameter $p-\ep$ and $p$ in such a way that a $(p-\ep)$-open edge is also $p$-open. Actually, the proof of Proposition 3.5 still holds true when $p=q$ and the definition of a good box becomes simpler. To stick to the exact context of Proposition 3.5 we use $p-\ep$ and $p$. One should not care too much about the $p-\ep$ since what we obtain at the end is $p$-open bypass so we can forget about the $p-\ep$.
\end{rk}
Thanks to Proposition 3.6 in \cite{CerfDembinreg}
\begin{equation}
\forall e \in \overline \gamma \qquad (\cD^{\sC_{p}\setminus\{e\}}(\widetilde{0},\widetilde{nx})- \cD^{\sC_{p}}(\widetilde{0},\widetilde{nx}))\ind_{\cR_e}\leq \kappa_0 |\shell(e)|
\end{equation}
where $\kappa_0$ is a constant depending on $p$, $d$ and $\ep$.
We recall that $\ell(e)$ was defined in \eqref{eq:defle}.
Thanks to the control on the size of the family $(|\shell(e)|,e\in\overline \gamma)$ in Proposition 3.5 and the Proposition 4.5 in \cite{CerfDembinreg}, there exist positive constants $C_1$ and $C_2$ such that for $n$ large enough
\[\Prb( \cE_0^c)\le \exp(- C_2n^{1/(6d^2+1)})\,.\]
where 
\[\cE_0:=\left\{  \mathrm{N}\le n^{1/3d},\,\sum_{e\in\overline \gamma}|\shell (e)|^2\le C_1 n\right\}\,.\]
Finally, we have the following control
\begin{equation*}
    \begin{split}
       \E\left[ \sum_{e\in\gamma}(\cD^{\sC_{p}\setminus\{e\}}(\widetilde{0},\widetilde{nx})- \cD^{\sC_{p}}(\widetilde{0},\widetilde{nx}))^2\ind_{\cR_e}\right]&\le\E\left[ \sum_{e\in\gamma}\ell(e)^2\ind_{\cR_e}\ind_{\cE_0}\right]+\E\left[ \sum_{e\in\gamma}\ell(e)^2\ind_{\cR_e}\ind_{\cE_0^c}\right]\\
       &\le C_1\kappa_0^2 n + \E\left[ \sum_{e\in\gamma\cap (\Lambda_{2n^{1/3d}}\cup (\Lambda_{2n^{1/3d}}+nx))}\ell(e)^2\ind_{\cR_e}\right]\\
       &\hspace{1cm}+ \sum_{e\in\E^d}\E\left[ \ell(e)^4\ind_{\cR_e}\ind_{e\in\gamma}\right] ^{1/2}\sqrt{\Prb(\cE_0^c)}\,
    \end{split}
\end{equation*}
where we use Cauchy-Schwarz inequality in the last inequality.
Besides, we have by Cauchy-Schwarz inequality and Lemma \ref{lem:contl} that
\begin{equation*}
\begin{split}
    \sum_{e\in\E^d}\E\left[ \ell(e)^4\ind_{\cR_e}\ind_{e\in\gamma}\right] ^{1/2}&\le \sum_{e\in\E^d}\E[\ell(e)^8\ind_{\cR_e}\ind_{e\in\gamma}]^{1/4}\Prb(e\in\gamma)^{1/4}\le\kappa_1 \log^4 n\sum_{e\in\E^d}\Prb(e\in\gamma)^{1/4}\,.
    \end{split}
\end{equation*}
It is easy to check that the right hand side is at most polynomial in $n$ using for instance Theorem \ref{thm:antal}. It follows that the following quantity goes to $0$ when $n$ goes to infinity
\[\sum_{e\in\E^d}\E\left[ \ell(e)^4\ind_{\cR_e}\ind_{e\in\gamma}\right] ^{1/2}\sqrt{\Prb(\cE_0^c)}\,.\]
Thanks to Lemma \ref{lem:contl}, we have
\begin{equation*}
    \begin{split}
        \E\left[ \sum_{e\in\gamma\cap (\Lambda_{2n{^1/3d}}\cup (\Lambda_{2n{^1/3d}}+nx))}\ell(e)^2\ind_{\cR_e}\right]&=\sum_{e\in  (\Lambda_{2n{^1/3d}}\cup (\Lambda_{2n{^1/3d}}+nx))}\E[\ell(e)^2\ind_{e\in\gamma}\ind_{\cR_e}]\le C_d n^{1/3}\log^4 n
    \end{split}
\end{equation*}
where $C_d$ is a constant depending only on $d$.
Combining the previous inequalities, we get for $n$ large enough
\begin{equation*}
    \E\left[\sum_{e\in\gamma}(\cD^{\sC_{p}\setminus\{e\}}(\widetilde{0},\widetilde{nx})- \cD^{\sC_{p}}(\widetilde{0},\widetilde{nx}))^2\ind_{\cR_e}\right]\le 2C_1\kappa_0^2 n\,.
\end{equation*}
The result follows.

\subsection*{Acknowledgements}The author was partially funded by the SNF Grant 175505 and the ERC Starting Grant CRISP and is part of NCCR SwissMAP. 

\bibliographystyle{plain}
\bibliography{biblio2}
\end{document}